\newcommand{\R}{\mathbb{R}}
\newcommand{\bdm}{\begin{displaymath}}
\newcommand{\edm}{\end{displaymath}}
\theoremstyle{definition}
\newtheorem{lem}{Lemma}
\newtheorem{thm}{Theorem}
\newtheorem{defn}{Definition}
\newtheorem{prop}{Proposition}
\newtheorem{rem}{Remark}
\newtheorem{eg}{Example}
\title[Metrics on tiling spaces]{Metrics on tiling spaces, local isomorphism and an application of Brown's Lemma.}
\author{Rui Pacheco}
\address{Universidade da Beira Interior\\
Rua Marquês d'Ávila e Bolama, 6200-001 Covilhã, Portugal}
\email{rpacheco@ubi.pt} \urladdr{http://www.mat.ubi.pt/$\sim$rpacheco}
\author{Helder Vilarinho}
\address{Universidade da Beira Interior\\
Rua Marquês d'Ávila e Bolama, 6200-001 Covilhã, Portugal}
\email{helder@ubi.pt} \urladdr{http://www.mat.ubi.pt/$\sim$helder}
\date{\today}
\thanks{The authors were partially supported by the Portuguese Government through FCT, under the project PEst-OE/MAT/UI0212/2011 (CMUBI)}
\keywords{Tiling spaces, multiple topological recurrence, local isomorphism, Brown's lemma.}
\begin{document}
\begin{abstract}We give an application of a topological dynamics version of multidimensional Brown's lemma to tiling theory: given a tiling  of an Euclidean space and a finite geometric pattern  of points $F$,  one can find a patch such that, for each scale factor $\lambda$,  there is a vector $\vec{t}_\lambda$ so that copies of this patch appear in the tilling ``nearly" centered on $\lambda F+\vec{t}_\lambda$ once we allow ``bounded perturbations" in the structure of the homothetic copies of $F$. Furthermore,  we introduce a new unifying setting for the study of tiling spaces which allows rather general group ``actions" on patches and we discuss the local isomorphism property of tilings within this setting.\end{abstract}

\subjclass[2010]{37B20, 37B50, 05B45, 05D10}
\maketitle

\section{Introduction}

The main idea of Ramsey theory  is that arbitrarily large
sets cannot avoid a certain degree of ``regularity". This
is exemplarily illustrated by Gallai's theorem, a
multidimensional version of the seminal van der Waerden's
theorem, which asserts that, given a finite coloring of
$\mathbb{Z}^n$, any finite subset $F$ of $\mathbb{Z}^n$ has
a monochromatic homothetic copy $\lambda F+\vec{t}$. As
shown recently  by de la Llave and Windsor  \cite{LW}, this
result has an interesting consequence in tiling theory.
Roughly speaking, given a tiling $y$ of $\mathbb{R}^n$ and
a finite geometric pattern $F\subset \mathbb{R}^n$ of
points, one can find a patch $y'$ of $y$ so that copies of
$y'$ appear in $y$ ``nearly" centered on some homothetic
version of the pattern. Hence, even if some sets of tiles
tile the plane only non-periodically (perhaps the Penrose
tiles  \cite{GS,Pen} are the most famous sets of tiles in
such conditions),  any tiling must exhibit some kind of
``approximate periodicity". The proof uses Furstenberg's
topological multiple recurrence theorem  for commuting
homeomorphisms \cite{Fu} (which is a topological dynamic
version of Gallai's theorem) applied to certain tiling
spaces $Y$ equipped with suitable metrics $d$.

Starting with a finite set $\mathcal{F}$ of \emph{prototiles}, three distinguished cases were considered by de la Llave and Windsor: the tiles of $y\in Y$ are obtained by taking translated (resp. direct isometric) copies of the prototiles, each $y\in Y$ exhibits \emph{finite local complexity}, which is a property that, roughly speaking, does not allow, for example, two  tiles to slide along their common boundary, and the distance $d$ makes two tilings close if they agree in a large ball about the origin up to a small translation (resp. direct isometry); thirdly, the tiles of $y\in Y$ are obtained by taking  direct isometric copies of the prototiles,  and  $d$ makes two tilings close if they agree in a large ball about the origin up to  small direct isometries (rigid motions) of each individual tile. In all these three cases, the metric spaces $(Y,d)$ are compact and the group of translations acts continuously on it. As a matter of fact, these are the standard three metrics that are frequently applied to tilings in the literature. In the first two cases, it is now well known that the corresponding topological tiling spaces can be seen as inverse limits of simpler topological spaces (see \cite{Sad} and references therein). More recently, Frank and Sadun \cite{FS} have shown that, with respect to the third metric,  some spaces without finite local complexity can also be understood as inverse limits.

In the present paper,  we start by establishing, in Section \ref{I}, a new unifying setting for the study of tiling spaces. The metrics we use make two tilings close if they agree in a large ball about the origin up to  small perturbations, which  are understood as elements of certain right invariant metric groups acting on patches.  These group ``actions" are not necessarily associated to
a  subgroup of isometries or even to a group of continuous transformations on the Euclidean space $\mathbb{R}^n$.  Hence, we can think on tilings formed by an infinite (up to isometries) variety  of tile types. Very recently, Frank and Sadun \cite{FS2} have studied certain spaces of tilings -- the \emph{fusion tiling spaces with infinite local complexity} -- exhibiting also an infinite variety of tile types. In such spaces they considered the topology  induced by a metric that takes into account the Hausdorff distance between the \emph{skeleton}  of tilings and a certain distance on the space of tile labels (the skeleton of a tiling of $\mathbb{R}^n$ is  the union of all the boundaries of its tiles). However this metric  does not carry any geometric information with respect to the underlying groups.
Our setting not only allows us
 to avoid to treat separately the three standard metrics, but also it is suitable to deal with more general group ``actions" on patches. Moreover, with minor technical changes we could also associate a set of labels to each tile and think on groups inducing actions on these sets of labels, giving in this way an alternative approach to handle with the fusion tiling spaces with local infinite complexity studied by Frank and Sadun \cite{FS2}. Here we will not explore the inverse limit point of view. The reader who are just interested in the usual metric spaces $(Y,d)$ of tilings exhibiting finite local complexity under isometries can skip this entire section (just taking into account Examples \ref{exampleG}, \ref{ex3} and \ref{ex4} for notational conventions), since the results for this particular case are well known and can be easily found in the literature, namely: $d$ defined by \eqref{distance} provides $Y$ with a compact metric structure, with respect to which any isometry acts continuously.

A not so famous Ramsey-type result is the so called Brown's lemma \cite{B1,B2}. Observe that Gallai's theorem does not say nothing, apart its existence, about the scale factor $\lambda$. On the other hand, the multidimensional version of Brown's lemma asserts  that one can take any $\lambda$ once we allow ``bounded perturbations" in the structure of the homothetic copies of $F$. In this paper,  we give an application of a topological dynamics version (Lemma \ref{BrownTop}, Section \ref{bltdv}) of this result  to tiling theory (Theorem \ref{BT}, Section \ref{btiling}).

This application  states that, given a tiling  of an Euclidean space and a finite geometric pattern  of points $F$,  one can find a patch such that, for each scale factor $\lambda$,  there is a vector $\vec{t}_\lambda$ so that copies (with respect to the underlying right invariant metric group) of this patch appear in the tilling ``nearly" centered on $\lambda F+\vec{t}_\lambda$ once we allow ``bounded perturbations" in the structure of the homothetic copies of $F$. For a tiling $y$ of $\mathbb{R}^n$ with finite complexity under isometries, and considering for instance the metric $d_\mathcal{I}$  on the group of direct isometries of  $\mathbb{R}^n$ defined by \eqref{di}, this reads as follows:
\smallskip
\begin{quote}
    \emph{Given  $\epsilon>0$ and a finite subset $F=\{\vec{v}_1,\ldots, \vec{v}_l\}$ of $\mathbb{R}^n$, there exist $q\in\mathbb{N}$ and a finite patch $y'$, whose support   contains the ball of radius $1/\epsilon$ about the origin,   satisfying:
for  each $\lambda>0$ and $\vec{v}_i\in F$, there exist a vector
$\vec{t}_\lambda\in \mathbb{R}^n$ (not depending on $i$) and an isometry $ g_{\lambda,i}$, with
$d_\mathcal{I}({g}_{\lambda,i}, Id)<\epsilon$, such that
   the translated copy of $g_{\lambda,i}(y')$  by $\lambda\vec{v}_i+\vec t_\lambda+\vec w_{\lambda,i}$ is a patch of $y$ for some $\vec{w}_{\lambda,i}= \sum_{j=1}^l\alpha_{j}\vec{v}_j$, with $|\alpha_{j}|\leq q$ for all $j$ (moreover, $\alpha_{j}\in\mathbb{Z}$ if $j\neq i$).}
\end{quote}
\smallskip

Recall that, in a topological dynamical system, a point $x$ is \emph{almost periodic} if, for every neighborhood $U$ of $x$, the set  of ``return times" to $U$  is \emph{relatively dense}. For tiling spaces, almost periodicity implies  the \emph{local isomorphism} property, which is
 a property that a tiling of an Euclidean space might have which  expresses a certain ``regularity". This is not an unusual property. For example, all Penrose tilings satisfy it \cite{GS}.  Radin and Wolff \cite{Rad} proved that any compact tiling space $Y$ with finite local complexity under direct isometries must admit a tiling satisfying  the local isomorphism property. We emphasize that de la Llave and Windsor's results and Theorem \ref{BT} in the present paper still hold
 even  in the lack of almost periodicity.

\section{Metrics on tiling spaces}\label{I}

Roughly speaking, a tiling of $\mathbb{R}^n$ is an arrangement of tiles that covers $\mathbb{R}^n$ without overlapping. Typically  one starts with a fixed finite set $\mathcal{F}$ of ``prototiles" and each tile is an isometric copy of some prototile \cite{GS,LW,Rad,Rob,Sad}. Denote by $Y_\mathcal{F}$ the set of all tilings of $\mathbb{R}^n$  obtained  in this way from a given prototile set $\mathcal{F}$. There is a natural metric on $Y_\mathcal{F}$, $d_\mathcal{F}$, with respect to which two tilings are close if their skeletons are close (with respect to the usual Hausdorff distance between compact sets of $\mathbb{R}^n$) on a large ball about the origin \cite{Rad}. However, it is possible to provide $Y_\mathcal{F}$ with alternative (but equivalent under certain conditions) metrics which carry much more geometric information. A familiar way to do this is by making two tilings close if they agree in large ball about the origin up to a small translation \cite{LW,Rob,Sad}.  Instead of translations, one could consider in this definition any subgroup of the group $\mathcal{I}$ of rigid motions group or even consider \emph{piecewise} rigid motions, that is, rigid motions of each individual tile \cite{LW,Sad}. In this section, we pretend to establish a new setting which unifies and generalizes the previous cases. In order to get some motivation in mind for our formalism, start by thinking in the case of piecewise rigid motions. Given a patch $x'$ with a number $n$ of tiles, the admissible perturbations on this patch is given by some elements of the group $\mathcal{I}^n$. Of course, since we have to avoid overlapping of tiles, not all elements of $\mathcal{I}^n$ induce admissible perturbations on $x'$. Hence we do not have a proper action of $\mathcal{I}^n$ on patches. Moreover, the admissible perturbations and its size are induced by elements of different metric  groups, according to the number of tiles of the patch. Perturbations on the subpatches of $x'$ must be compatible with  perturbations on $x'$. From this example we see that a general setting allowing a considerable variety of group ``actions" on patches must be to some extent similar to a sheaf construction.

\vspace{.20in}

Consider $\mathbb{R}^n$ with its usual Euclidean norm $\|\cdot\|$ and write $B_r=\{\vec v\in\mathbb{R}^n:\,\|\vec v\|\leq r\}$. A set $D\subset \mathbb{R}^n$ is called a \emph{tile} if it is compact, connected and equal to the closure of its interior. A \emph{tiling} of a subset $S\subseteq \mathbb{R}^n$ is a collection  $x=\{D_i\}_{i\in I}$ of tiles such that: \begin{itemize}
    \item[(T$_1$)] $S=\bigcup_{i\in I}D_i$;
    \item[(T$_2$)] $D^{^\circ}_i\cap D^{^\circ}_j=\emptyset$, for all $i,j\in I$ with $i\neq j$.
  \end{itemize}
  In this case we say that $S$ is the \emph{support} of $x$ and write $S=\mathrm{supp}(x)$. We denote by $X(S)$ the set of all tilings of $S$ and define $\mathcal{X}:=\bigcup_S X(S).$ If  $x,x'\in\mathcal{X}$ and $x'\subseteq x$, then $x'$ is called a \emph{patch} of $x$.

The set $\mathcal{X}$ of all tilings is too large. Typically one focuses attention on certain subsets of $\mathcal{X}$, such as in the case of tiling spaces with finite local complexity under rigid motions. So, let $\mathcal{Y}$ be a subset of $\mathcal{X}$ satisfying:
  \begin{itemize}
    \item[($\mathcal{Y}_1$)] for all $x\in \mathcal{Y}$ and $x'\subseteq x$, we have $x'\in\mathcal{Y}$;
    \item[($\mathcal{Y}_2$)] given $x\in\mathcal{X}$, if $x'\in \mathcal{Y}$ for all  $x'\subsetneq x$ with bounded support, then   $x\in \mathcal{Y}$.
  \end{itemize}
Axiom ($\mathcal{Y}_1$) will allow us to go from constructions on tilings to constructions on their patches, whereas  ($\mathcal{Y}_2$) essentially states the converse, that is, it will allow us to go from local constructions on patches  to global constructions on tilings.
If $K\subset \mathbb{R}^n$ is compact, we denote by $x[[K]]$ the set of all patches $x'$ of $x\in \mathcal{Y}$ with bounded support satisfying $K\subseteq \mathrm{supp}( x')$. Clearly, if $x'\in x[[K']]$ and $x''\in x[[K'']]$ then $x'\cap x''\in x[[K'\cap K'']]$.

Now, fix an equivalence relation on $\mathcal{Y}$ and denote by $[x]$ the equivalence class of $x\in\mathcal{Y}$. Associate to each equivalence class $[x]$ a metric group $(G{[x]},d_{G{[x]}})$ and to each ordered pair $(y,z)$ of patches in $[x]$ a non-empty subset $\gamma(y,z)$ of $G[x]$
such that:
\begin{itemize}
  \item[($\Gamma_1$)] $\gamma(y,z)=\gamma(z,y)^{-1}$;
  \item[($\Gamma_2$)] $\gamma(z,w)\gamma(y,z)=\gamma(y,w)$;
  \item[($\Gamma_3$)] $\gamma(x,y)\cap\gamma(x,z)=\emptyset$ if $y\neq z$.  \end{itemize}
If $g\in\gamma(x,y)$, we write $y=g(x)$ and say that $g$\emph{ transforms} $x$ in $y$. Observe that $I\!d_{G[x]}\in \gamma(y,y)$ for all $y\in[x]$. Define $$\gamma(x,[x]):=\bigcup_{y\in[x]} \gamma(x,y)\subseteq G[x].$$
For example, in the case of piecewise rigid mitions, we consider two tilings $x$ and $y$ equivalent if the tiles of $x$  can be individually perturbed by rigid motions in order to obtain $y$. If $x$ has $n$ tiles, $G[x]$ is  precisely $\mathcal{I}^n$.  The subset $\gamma(x,y)$ is the set of all elements of  $\mathcal{I}^n$ that transform $x$ in $y$. Depending on the geometry of $x$ and $y$, this subset can contain more than one element. In the general case,  ($\Gamma_1$) states that if $g$ transforms $x$ in $y$ then $g^{-1}$ transforms $y$ in $x$. Axiom   ($\Gamma_2$) states that if $g$ transforms $y$ in $z$ and $h$ transforms $z$ in $w$, then $hg$ transforms $y$ in $w$. Finally,   ($\Gamma_3$) states that any element  $g$ of $\gamma(x,[x])$ transforms $x$ in one, and only one, tiling $y$ in the equivalence class of $x$. We call the elements of $\gamma(x,[x])$ the \emph{(admissible) pertubartions} of $x$.
The metric on the space of tilings of $\mathbb{R}^n$  will be induced by those of $G[x]$, with $x\in\mathcal{Y}$: two tilings will be close if they agree in a large ball about the origin up to a ``small" admissible perturbation. In view of this, we shall also need to  assume the following:
\begin{itemize}
\item[(G$_1$)] $d_{G{[x]}}$ is  right invariant, that is, $d_{G{[x]}}(fg,hg)=d_{G{[x]}}(f,h)$ for all $g,f,h\in G{[x]}$;
\item[(G$_2$)] for each pair of equivalence classes admitting representatives $x'\subseteq x$,
 there exists a homomorphism $\iota_{[x,x']}:{G{[x]}}\to{G{[x']}}$ such that, if $g\in\gamma(x,[x])$, then $\iota_{[x,x']}(g)\in \gamma(x',[x'])$ and $\iota_{[x,x']}(g)(x')\subseteq g(x)$;
\item[(G$_3$)] if $g\in G{[x]}$ and $x'\subseteq x$, then $\|\iota_{[x,x']}(g)\|_{G{[x']}}\leq \|g\|_{G{[x]}}$, where  $\|g\|_{G{[x]}}=d_{G{[x]}}(g,I\!d_{G{[x]}})$ is the \emph{size} of $g$;
\item[(G$_4$)] given  $x,y\in\mathcal{Y}$,  $G{[x, y]}:=\{g\in \gamma(x,[x]):\,g(x)\subseteq y\}$ is closed in $\gamma(x,[x])$.
\end{itemize}
Axiom (G$_2$) says that an admissible perturbation $g$ on a tiling $x$ induces an admissible perturbation on each patch $x'$ of $x$, which, by  (G$_3$), has size  less or equal than the size of $g$.
For notational convenience, we shall denote $\iota_{[x,x']}(g)$ by $g$ whenever  it is clear which  equivalence classes we are dealing with. Observe that $\iota_{[x,x']}$ only depends on the equivalence classes $[x]$ and $[x']$. The right invariance of $d_{G[x]}$ gives us for free the items (a) and (b) of the following lemma:
\begin{lem}\label{G}
For all $x\in \mathcal{Y}$, and $g,h\in G[x]$, we have:
      \begin{itemize}
        \item[(a)]  $\|g\|_{_{G[x]}}=\|g^{-1}\|_{_{G[x]}}$;
        \item[(b)]  $\|hg\|_{_{G[x]}}\leq \|g\|_{_{G[x]}}+\|h\|_{_{G[x]}};$
        \item[(c)] if $x'\subseteq x$, $\iota_{[x,x']}:G[x]\to G[x']$ is continuous.
      \end{itemize}
    \end{lem}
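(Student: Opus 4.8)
The plan is to treat (a) and (b) as immediate consequences of the right invariance (G$_1$) together with the triangle inequality and the symmetry of the metric, and to concentrate the real effort on (c), where the point is to show that the homomorphism $\iota_{[x,x']}$ does not increase distances. The unifying idea throughout is that right invariance lets us convert any distance $d_{G[x]}(f,h)$ into a size $\|fh^{-1}\|_{G[x]}$, after which the structural axiom (G$_3$) can be brought to bear.

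For (a), I would start from $\|g^{-1}\|_{G[x]}=d_{G[x]}(g^{-1},I\!d_{G[x]})$ and multiply both arguments on the right by $g$; by (G$_1$) this equals $d_{G[x]}(I\!d_{G[x]},g)$, which by symmetry of the metric is exactly $\|g\|_{G[x]}$. For (b), I would apply the triangle inequality to write $\|hg\|_{G[x]}=d_{G[x]}(hg,I\!d_{G[x]})\leq d_{G[x]}(hg,g)+d_{G[x]}(g,I\!d_{G[x]})$; right invariance collapses the first term to $d_{G[x]}(h,I\!d_{G[x]})=\|h\|_{G[x]}$, while the second term is $\|g\|_{G[x]}$, giving the claim. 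Neither step requires anything beyond (G$_1$), which is why they come ``for free''.

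The heart of the lemma is (c). Fix $x'\subseteq x$ and write $\iota=\iota_{[x,x']}$, a group homomorphism by (G$_2$). Given $g,h\in G[x]$, I would use right invariance in $G[x']$ to rewrite the image distance as a size, and then use that $\iota$ is a homomorphism (so $\iota(g)\iota(h)^{-1}=\iota(gh^{-1})$, and in particular $\iota(h)^{-1}=\iota(h^{-1})$):
\[
d_{G[x']}(\iota(g),\iota(h))=\|\iota(g)\iota(h)^{-1}\|_{G[x']}=\|\iota(gh^{-1})\|_{G[x']}.
\]
Now (G$_3$) applied to the element $gh^{-1}\in G[x]$ bounds the right-hand side by $\|gh^{-1}\|_{G[x]}$, which by right invariance in $G[x]$ is just $d_{G[x]}(g,h)$. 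Hence $d_{G[x']}(\iota(g),\iota(h))\leq d_{G[x]}(g,h)$, so $\iota$ is $1$-Lipschitz and in particular continuous.

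There is no genuine obstacle here, since the axioms (G$_1$) and (G$_3$) were evidently designed to yield exactly this non-expanding property; the only point requiring care is the algebraic manipulation converting the difference of images into the image of a quotient, which relies on $\iota$ being a homomorphism between \emph{groups} rather than merely a map. Once that identity is in place, the norm bound (G$_3$) does all the work, and continuity follows without any appeal to completeness, compactness, or the finer structure of the groups $G[x]$.
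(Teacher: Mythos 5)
Your proposal is correct and follows essentially the same route as the paper: (a) and (b) are read off from right invariance and the triangle inequality exactly as in the paper's one-line computation, and your Lipschitz argument for (c) is just the explicit unwinding of the paper's remark that (c) ``is a direct consequence of (G$_3$)''. The only difference is that you spell out the conversion $d_{G[x']}(\iota(g),\iota(h))=\|\iota(gh^{-1})\|_{G[x']}\leq\|gh^{-1}\|_{G[x]}=d_{G[x]}(g,h)$, which the paper leaves implicit.
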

   \begin{proof}
    For instance,
     $$\|hg\|_{_{G[x]}}=d_{G[x]}(hg,I\!d_{G[x]})\leq d_{G[x]}(hg,g)+d_{G[x]}(g,I\!d_{G[x]})= \|h\|_{_{G[x]}}+\|g\|_{_{G[x]}}.$$ Item (c) is a direct consequence of (G$_3$).
   \end{proof}

We emphasize that, even when $x$ is a single tile, the group $G[x]$ does not have to be induced by a group of continuous transformations on $\mathbb{R}^n$. For example, with minor technical changes we could associate a set of labels to each tile and think on groups inducing actions on these sets of labels, giving in this way an alternative approach to handle with the fusion tiling spaces with local infinite complexity studied by Frank and Sadun \cite{FS2}. However, we need to impose a certain kind of continuity to the action of elements of $\gamma(x,[x])$ on the support of $x$. For that, consider the auxiliary set $\Theta$ of functions $\theta:]\sqrt{2},\infty[\times\R_0^+\to\R_0^+$ satisfying:
\begin{itemize}
\item[($\Theta_1$)]for each $s>\sqrt{2}$ and $b\geq 0$, the function $\theta_s(\cdot):=\theta(s,\cdot)$ is strictly increasing and the function $\theta^b(\cdot):=\theta(\cdot,b)$ is increasing;
\item[($\Theta_2$)] $\theta(s,a+b)\leq \theta(s,a) + \theta(s,b)$, for all $s>\sqrt{2}$, and $a,b\geq 0$;
\item[($\Theta_3$)] $\theta $ is continuous and $\theta(s,0)=0$, for all $s>\sqrt{2}$.
\end{itemize}
Assume that:
\begin{itemize}
  \item[(G$_5$)] there exists  $\theta\in\Theta$ such that, for all $x\in\mathcal{Y}$, $s>\sqrt{2}$ and  $g\in \gamma(x,[x])$ with $\theta(s,\|g\|_{G{[x]}})<\sqrt{2}/2$, we have:
 if $\mathrm{supp}( x)\subseteq \mathbb{R}^n\setminus B_s$, then $\mathrm{supp}( g(x))\subseteq \mathbb{R}^n\setminus B_{s-\theta(s,\|g\|_{G{[x]}})}$.

\end{itemize}
 We can think on $\theta(s,b)$ as the ``maximal Euclidean impact" that  perturbations $g$ of size $b$ induce on a ball of radius $s$ centered at the origin.
 Having the rigid motions in mind, we expect that this  impact strictly increases with the size of the perturbations and vanishes if and only if $b=0$.  For rigid motions, the relative position of $g(x)$ with respect to $x$  depends on the transformation $g$ but not  on the initial position of $x$ in the Euclidean space $\R^n$. However, for other transformations, such as homotheties, the relative position depends also on the initial position of $x$. That is why in the general case we need to consider the functions $\theta$ depending on two variables.
 Observe that the ``Euclidean impact"  induced by a given homothety increases with the radius $s$.
 The appearance of $\sqrt 2$ only has to do with the choice in \eqref{distance} of the maximal distance between tilings.

Before describe in detail some examples, let us establish the following useful lemma:
\begin{lem}\label{i}
  If $B_s\subseteq \mathrm{supp}( x)\cap \mathrm{supp}\big(g( x)\big)$, then, for all $\sqrt{2}<s'\leq s$ and $x'\subseteq x$ with $B_{s'}\subseteq  \mathrm{supp}( x')$, we have  $B_{s'-\theta(s',\|g\|_{G{[x]}})}\subseteq \mathrm{supp}\big(g( x')\big)$.
\end{lem}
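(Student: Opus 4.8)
\emph{The plan.} The statement asks for a lower bound on the region covered by $g(x')$, so I would reduce it to an application of (G$_5$) to the \emph{complementary patch}. Let $z$ be the sub-collection of tiles of $x$ that do not belong to $x'$; by $(\mathcal Y_1)$ it is again a patch in $\mathcal Y$, and $x$ is the disjoint union of the tiles of $x'$ and of $z$. Since a perturbation acts tilewise and bijectively on the tiles of $x$, it carries this partition to a partition of $g(x)$, whence $\mathrm{supp}(g(x))=\mathrm{supp}(g(x'))\cup\mathrm{supp}(g(z))$, where as usual $g$ stands for $\iota_{[x,x']}(g)$ on $x'$ and for $\iota_{[x,z]}(g)$ on $z$ (the inclusion $\mathrm{supp}(g(x))\subseteq\cdots$ is the one that uses surjectivity onto the tiles of $g(x)$, the reverse inclusion being immediate from (G$_2$)). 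Thus it suffices to show that $g(z)$ leaves the ball $B_{s'-\theta(s',\|g\|_{G[x]})}$ uncovered: since this ball is contained in $B_s\subseteq\mathrm{supp}(g(x))$, disjointness from $\mathrm{supp}(g(z))$ will force it into $\mathrm{supp}(g(x'))$.

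\emph{Keeping $z$ away from the centre.} Next I would check that $z$ stays outside the ball covered by $x'$. Because the interiors of the tiles of $z$ and of $x'$ are disjoint by (T$_2$) while $B_{s'}\subseteq\mathrm{supp}(x')$, any point lying both in $B_{s'}^{\circ}$ and in the interior of a tile of $z$ would admit a small open ball inside that tile interior and inside $\mathrm{supp}(x')$, forcing it to meet the interior of a tile of $x'$ -- a contradiction; taking closures gives $\mathrm{supp}(z)\cap B_{s'}^{\circ}=\emptyset$, where $B_{s'}^{\circ}$ denotes the open ball. Consequently $\mathrm{supp}(z)\subseteq\R^n\setminus B_{s''}$ for every $\sqrt2<s''<s'$. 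The size estimate then transfers: by (G$_3$) we have $\|\iota_{[x,z]}(g)\|_{G[z]}\le\|g\|_{G[x]}$, so by the monotonicity of $\theta$ in its second argument ($\Theta_1$), $\theta(s'',\|\iota_{[x,z]}(g)\|_{G[z]})\le\theta(s'',\|g\|_{G[x]})$.

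\emph{Applying (G$_5$) and passing to the limit.} Fix $\sqrt2<s''<s'$ with $\theta(s'',\|g\|_{G[x]})<\sqrt2/2$. Applying (G$_5$) to $z$ at radius $s''$ gives $\mathrm{supp}(g(z))\subseteq\R^n\setminus B_{s''-\theta(s'',\|\iota_{[x,z]}(g)\|_{G[z]})}\subseteq\R^n\setminus B_{s''-\theta(s'',\|g\|_{G[x]})}$, the last inclusion by the estimate above. Hence the closed ball $B_{s''-\theta(s'',\|g\|_{G[x]})}$ is disjoint from $\mathrm{supp}(g(z))$; being contained in $B_s\subseteq\mathrm{supp}(g(x))=\mathrm{supp}(g(x'))\cup\mathrm{supp}(g(z))$, it lies in $\mathrm{supp}(g(x'))$. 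Finally I would let $s''\uparrow s'$: by the continuity of $\theta$ ($\Theta_3$) the radii $s''-\theta(s'',\|g\|_{G[x]})$ converge to $s'-\theta(s',\|g\|_{G[x]})$, so the union of these balls contains the open ball $B_{s'-\theta(s',\|g\|_{G[x]})}^{\circ}$; taking closures and using that the support of a patch is closed yields $B_{s'-\theta(s',\|g\|_{G[x]})}\subseteq\mathrm{supp}(g(x'))$, as required.

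\emph{The main obstacle.} The delicate point is the boundary sphere $\partial B_{s'}$: tiles of $z$ may touch it, so $\mathrm{supp}(z)$ is only guaranteed to avoid the \emph{open} ball $B_{s'}^{\circ}$, whereas (G$_5$) requires the support to lie outside a \emph{closed} ball. This is exactly why I run the argument at every radius $s''<s'$ and recover the sharp radius through the continuity of $\theta$. A secondary verification is that (G$_5$) is only available while $\theta(s'',\|g\|_{G[x]})<\sqrt2/2$; since $\theta(\cdot,\|g\|_{G[x]})$ is increasing, this holds for all $s''<s'$ precisely in the regime $\theta(s',\|g\|_{G[x]})\le\sqrt2/2$, which is the range in which the asserted inclusion carries genuine geometric content.
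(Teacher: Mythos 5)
Your argument is correct and is essentially the paper's proof read contrapositively: both reduce the claim to applying (G$_5$) to the tiles of $x\setminus x'$, which must stay out of $B_{s'}$ because $B_{s'}\subseteq \mathrm{supp}(x')$ and tile interiors are disjoint. The paper phrases this as a one-line contradiction with a single offending tile $D_i\subset \mathbb{R}^n\setminus B_{s'}$, silently passing over the fact that such a tile may touch the sphere $\partial B_{s'}$; your limiting argument $s''\uparrow s'$ repairs exactly that imprecision, so your write-up is if anything the more careful of the two.
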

\begin{proof}
   Take $\sqrt{2}<s'\leq s$ and a patch $x'$ of $x$  such that $B_s\subseteq \mathrm{supp}( x)\cap \mathrm{supp}\big(g(x)\big)$ and $B_{s'}\subseteq \mathrm{supp}( x')$. Set $t=\|g\|_{G{[x]}}$.
 Suppose that $ B_{s'-\theta(s',t)}$ is not contained in $ \mathrm{supp}\big(g(x')\big)$. In this case, since $B_s\subseteq \mathrm{supp}( x)\cap \mathrm{supp}\big(g(x)\big)$, there must exists a tile $D_i\subset \mathbb{R}^n\setminus B_{s'}$ in  $x\setminus x'$ such that $g_i(D_i)\cap   B_{s'-\theta(s',t)}\neq \emptyset$.  But this contradicts (G$_5$).
\end{proof}

\begin{eg}\label{exampleG}
Let $G$ be a group  equipped  with a right invariant metric $d_G$. Suppose that we have a continuous group action of $G$ on  $\mathbb{R}^n$.
 Define an equivalence relation $\sim$ on $\mathcal{Y}$ as follows:
 \begin{itemize}\item[(E$_1$)]
given two elements $x=\{D_j\}_{j\in J}$ and $x'=\{D'_k\}_{k\in K}$ of $\mathcal{Y}$, write $x\sim x'$ if there exist a bijection $\alpha:J\to K$ and  $g\in G$ such that, for each $j\in J$, $D'_{\alpha(j)}=g(D_j)$.
\end{itemize}
Let $\gamma(x,x')$ be the set of all such elements of $G$, and
 for each equivalence class $[x]$ put $G[x]=G$. If $x'\subseteq x$, set $\iota_{[x,x']}(g)=g$. Clearly, these choices satisfy (G$_1$)-(G$_4$). Let us consider two particular cases and find corresponding  functions $\theta\in \Theta$ satisfying (G$_5$):
 \begin{itemize}
 \item[(a)]  $G$ is the group of rigid motions (direct isommetries) of $\mathbb{R}^n$:
$$\mathcal{I}=\{g: \,g(\vec v)=R(\vec v)+\vec g,\,\textrm{with $R\in SO(n)$ and $\vec g\in\mathbb{R}^n$}\}.$$
 We can define on $\mathcal{I}$ a right invariant metric $d_{\mathcal{I}}$ by
 \begin{equation}\label{di}
   d_\mathcal{I}(g,h)=\max_{\|\vec v\|\leq 1}\|g ^{-1}(\vec v)-h^{-1}(\vec v)\|.
 \end{equation}
Given $g\in  \mathcal{I}$ with $g(\vec v)=R(\vec v)+\vec g$, we have $\|g\|_{\mathcal{I}}\geq \|\vec g\|$, and $B_{s-\|g\|_\mathcal{I}}\subseteq B_{s-\|\vec g\|}\subseteq g(B_s)$. Hence (G$_5$)  holds for $\theta(s,t)=t$.
\item[(b)] $G$ is the group of homothetic transformations   of $\mathbb{R}^n$:
\begin{equation*}
\label{dH}
\mathcal{H}=\{g: \,g(\vec v)=\lambda \vec v+\vec g,\,\textrm{with $\lambda>0$ and $\vec g\in\mathbb{R}^n$}\}.
\end{equation*}
Consider the  metric on $\mathcal{H}$ defined by: if $g(\vec v)=\lambda \vec v+\vec g$ and $h(\vec v)=\mu \vec v+\vec h$, then
$\tilde{d}_\mathcal{H}(g,h)=\max\big\{|\ln({\lambda}/{\mu})|,\|\vec g-\vec h\|\big\}.$ This equips $\mathcal{H}$ with a structure of topological group with respect to which the standard action of $\mathcal{H}$ on $\mathbb{R}^n$ is a continuous action.  Although $\tilde{d}_\mathcal{H}$ is not right invariant, we can construct a right invariant metric $d_\mathcal{H}$ on $\mathcal{H}$, topologically equivalent  to $\tilde{d}_\mathcal{H}$, as follows: consider the continuous function $F:\mathcal{H}\to [0,1]$ defined by $F(g)=\max\{1-\tilde{d}_\mathcal{H}(I\!d_\mathcal{H},g),0\}$ and set
\begin{equation}\label{dh}
d_\mathcal{H}(g,h)=\sup_{f\in \mathcal{H}}\big|F(gf)-F(hf)\big|.
\end{equation}
This construction is motivated by the standard proof of Birkhoff-Kakutani theorem (see \cite{MZ} for details). Now, it follows from the definition \eqref{dh} that $$\|g\|_\mathcal{H}\geq \tilde{d}_\mathcal{H}(I\!d_\mathcal{H},g)=\max\{|\ln (\lambda)|,\|\vec g\|\}$$ if $\tilde{d}_\mathcal{H}(I\!d_\mathcal{H},g)< 1$. In this case, it is clear that $$B_{s-(s\|g\|_\mathcal{H}+\|g\|_\mathcal{H})}\subseteq B_{s-(s|\ln(\lambda)|+\|\vec g\|)}\subseteq B_{\lambda s-\|\vec g\|}  \subseteq g(B_s).$$ Otherwise, if $\tilde{d}_\mathcal{H}(I\!d_\mathcal{H},g)\geq  1$, we have $\|g\|_\mathcal{H}=1$, and   $B_{s-(s\|g\|_\mathcal{H}+\|g\|_\mathcal{H})}=\emptyset.$
It is now easy to check that (G$_5$) holds for $\theta(s,t)=st+t$.

\end{itemize}

\end{eg}

\begin{eg}\label{exampleG2}
Again, let $G$ be a group  equipped  with a right invariant metric $d_G$. Suppose that we have a continuous group action of $G$ on  $\mathbb{R}^n$.
 Define an equivalence relation $\sim$ on $\mathcal{Y}$ as follows:
 \begin{itemize}
 \item[(E$_2$)]
given two elements $x=\{D_j\}_{j\in J}$ and $x'=\{D'_k\}_{k\in K}$ of $\mathcal{Y}$, write $x\sim x'$ if there exist a bijection $\alpha:J\to K$ and a collection $\{g_j\}_{j\in J}$ of elements in $G$ such that $D'_{\alpha(j)}=g_j(D_j)$  for each $j\in J$ and $\sup_{j\in J}\|g_j\|_G<\infty$.
\end{itemize}
Of course, if $x$ and $x'$ satisfy (E$_1$) then they also satisfy (E$_2$). Associate to $x=\{D_j\}_{j\in J}$ the group $G[x]\subseteq\prod_{j\in J}G$ of all maps ${g}:J\to G$ such that $\sup_{j\in J}\|g_j\|_G<\infty$, where $g_j={g}(j)$. The group multiplication  is given in the usual way: ${g}h(j)=g_jh_j$.  In this case, $\gamma(x,x')$ is the set of all $g\in G[x]$ satisfying (E$_2$).
  We can define on $G[x]$ a right invariant metric $d_{G[x]}$ by
 \begin{equation}\label{dP}
   d_{G[x]}(g,h)=\sup_{j\in J}d_G(g_j,h_j).
 \end{equation}
 For $x'\subseteq x$, with $x'=\{D_j\}_{j\in J'}$ and $J'\subseteq J$, set $\iota_{[x,x']}(g)=g_{|J'}$. These choices satisfy (G$_1$)-(G$_4$). Again, if $G=\mathcal{I}$ then (G$_5$) holds for $\theta(s,t)=t$ and if  $G=\mathcal{H}$ then (G$_5$) holds for $\theta(s,t)=st+t$.
 \end{eg}

Given an element $x'\in\mathcal{Y}$ with bounded support and $r>0$  with $B_{1/r}\subseteq \mathrm{supp}(x')$, set
\begin{align*}
\Delta(x',r)=\inf\{s>0: \,\mathrm{supp}\big(g(x'')\big)\subseteq  B_s\,\,\,\text{for all}\,& g\in\gamma(x'',[x'']) \,\text{and}\,x''\subseteq x'\,\\& \text{with}\,B_{1/r}\cap \mathrm{supp}\big(g(x'')\big)\neq \emptyset \}.\end{align*}
So, the ball of radius $\Delta(x',r)$ about the origin contains all the admissible copies of $x'$ (and of its patches) that intersect $B_{1/r}$.
Taking account $(\Gamma_1)$ and $(\Gamma_2)$, observe that $\Delta(x',r)=\Delta(y',r)$ if $y'\in [x']$ and  $B_{1/r}$ is contained in the supports of $x'$ and $y'$. Moreover, if $x''\subseteq x'$ and $s\leq r$, then $\Delta(x'',s)\leq \Delta(x',r)$. Henceforth we assume that
\begin{itemize}
\item[(G$_6$)]$\Delta(x',r)< \infty$ for all  $x'\in\mathcal{Y}$ with bounded support and $r>0$.
\end{itemize}
\begin{rem}\label{rem1}
In the setting of Examples \ref{G} and \ref{exampleG2}, it is clear that (G$_6$) holds if $G=\mathcal{I}$. However, in general it does not hold when $G=\mathcal{H}$, unless we demand that all tiles in $\mathcal{Y}$ have diameter less or equal than a certain $M>0$.
\end{rem}

Denote by $Y$ the set of all tilings of $\mathbb{R}^n$ in $\mathcal{Y}$ and, for $x,y\in Y$,  define
  \begin{align}
  \nonumber  d(x,y)=\inf\Big\{\{\sqrt{2}/2\}\cup&\{0<r<\sqrt{2}/2: \,\textrm{exist $x'\in x[[B_{1/r}]]$,  $\,y'\in y[[B_{1/r}]],$}\\ &\textrm{and $g\in \gamma(x',y')\subseteq{G[x']}$ with $\theta(\Delta(x',r),\|g\|_{_{G[x']}})\leq r$}\}\Big\}.\label{distance}
  \end{align}
\begin{prop}
$(Y,d)$ is a metric space.
\end{prop}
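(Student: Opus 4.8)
The plan is to check the four defining properties of a metric for the function $d$ of \eqref{distance}. That $d$ takes values in $[0,\sqrt2/2]$ is built into the definition, and $d(x,x)=0$ follows by using, for every $0<r<\sqrt2/2$, the witness $x'=y'\in x[[B_{1/r}]]$ together with $g=I\!d_{G[x']}\in\gamma(x',x')$: then $\|g\|=0$ and $\theta(\Delta(x',r),0)=0\le r$ by $(\Theta_3)$, so every such $r$ lies in the set in \eqref{distance}. I will first record that this set is an up-set in $(0,\sqrt2/2)$: if $r$ is a witness level and $r<r''<\sqrt2/2$, the same patches (whose supports now contain $B_{1/r}\supseteq B_{1/r''}$) and the same $g$ serve for $r''$, because $\Delta(x',r'')\le\Delta(x',r)$ and hence $\theta(\Delta(x',r''),\|g\|)\le\theta(\Delta(x',r),\|g\|)\le r<r''$ by $(\Theta_1)$. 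Symmetry is then straightforward: given a witness $x',y'=g(x'),g$ for $d(x,y)$ at level $r$, the equality $[y']=[x']$ (so $G[y']=G[x']$), together with $g^{-1}\in\gamma(y',x')$ from $(\Gamma_1)$, Lemma \ref{G}(a) giving $\|g^{-1}\|=\|g\|$, and the class-invariance $\Delta(y',r)=\Delta(x',r)$ noted before $(\mathrm G_6)$, show that $y',x',g^{-1}$ is a witness for $d(y,x)$ at the same level; the two witness sets coincide and $d(x,y)=d(y,x)$.

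The separation axiom $d(x,y)=0\Rightarrow x=y$ is where $(\mathrm G_4)$ enters. Fix a patch $x_0\subseteq x$ of bounded support and choose witness levels $r_n\downarrow 0$, with witnesses $x_n,y_n=g_n(x_n),g_n$. Since $\Delta(x_n,r_n)\ge 1/r_n\to\infty$, fixing any $s_0>\sqrt2$ and using that $\theta(\cdot,b)$ is increasing gives $\theta(s_0,\|g_n\|)\le\theta(\Delta(x_n,r_n),\|g_n\|)\le r_n\to0$; as $\theta(s_0,\cdot)$ is continuous, strictly increasing and vanishes only at $0$, this forces $\|g_n\|\to0$. For $n$ large enough that $\mathrm{supp}(x_0)\subseteq B_{1/r_n}$ we have $x_0\subseteq x_n$, so by $(\mathrm G_2)$ the induced perturbation $\iota_{[x_n,x_0]}(g_n)$ lies in $\gamma(x_0,[x_0])$ and maps $x_0$ into $g_n(x_n)=y_n\subseteq y$, i.e.\ $\iota_{[x_n,x_0]}(g_n)\in G[x_0,y]$; moreover $\|\iota_{[x_n,x_0]}(g_n)\|\le\|g_n\|\to0$ by $(\mathrm G_3)$, so $\iota_{[x_n,x_0]}(g_n)\to I\!d_{G[x_0]}$. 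Because $G[x_0,y]$ is closed in $\gamma(x_0,[x_0])$ by $(\mathrm G_4)$ and $I\!d_{G[x_0]}\in\gamma(x_0,[x_0])$, the limit $I\!d_{G[x_0]}$ belongs to $G[x_0,y]$, that is $x_0=I\!d_{G[x_0]}(x_0)\subseteq y$. Thus every bounded patch of $x$---in particular every tile---is a patch of $y$, and the symmetric statement follows from $d(y,x)=0$; hence $x=y$.

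The triangle inequality $d(x,z)\le d(x,y)+d(y,z)$ is the heart of the matter. Pick witness levels $r_1>d(x,y)$ and $r_2>d(y,z)$; if $r:=r_1+r_2\ge\sqrt2/2$ there is nothing to prove, so assume $r<\sqrt2/2$. With witnesses $x_1,y_1=g(x_1),g$ and $y_2,z_2=h(y_2),h$, I would pass to the common refinement $y_0:=y_1\cap y_2$, which lies in $y[[B_{1/\max(r_1,r_2)}]]$, set $x_0:=g^{-1}(y_0)\subseteq x_1$ and $z_0:=h(y_0)\subseteq z_2$, and form $k:=hg\in\gamma(x_0,z_0)$ by $(\Gamma_2)$, noting $\|k\|\le\|g\|+\|h\|$ by Lemma \ref{G}(a),(b) and $(\mathrm G_3)$. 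Granting that $x_0\in x[[B_{1/r}]]$ and $z_0\in z[[B_{1/r}]]$ (the delicate point, discussed below), the size condition then follows from the subadditivity of $\theta$ in its second variable $(\Theta_2)$ together with the monotonicity and class-invariance of $\Delta$: indeed $\Delta(x_0,r)\le\Delta(x_1,r_1)$ and $\Delta(x_0,r)=\Delta(y_0,r)\le\Delta(y_2,r_2)$, whence $\theta(\Delta(x_0,r),\|k\|)\le\theta(\Delta(x_0,r),\|g\|)+\theta(\Delta(x_0,r),\|h\|)\le r_1+r_2=r$. Thus $r$ is a witness level for $d(x,z)$, and taking the infimum over admissible $r_1,r_2$ gives the claim.

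I expect the support-tracking step---showing that both $x_0$ and $z_0$ still contain $B_{1/r}$---to be the main obstacle, precisely because applying a perturbation contracts supports. A naive application of Lemma \ref{i} to the pull-back only yields $\mathrm{supp}(x_0)\supseteq B_{1/\max(r_1,r_2)-\theta(1/\max(r_1,r_2),\|g\|)}$, whose radius may fall short of $1/r$. The resolution is to exploit that the witness $x_1$ contains not merely $B_{1/r_1}$ but also $g^{-1}(B_{1/r_1})$, since $y_1=g(x_1)\in y[[B_{1/r_1}]]$ forces $g(\mathrm{supp}(x_1))\supseteq B_{1/r_1}$; the analogous two-sided containment holds for $y_2$ and $h$. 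Feeding both containments through Lemma \ref{i} and the composition---on the model of the translation case, where the guaranteed common region for $x$ and $z$ is $B_{1/r}\cup g^{-1}(B_{1/r})$ intersected with its $h$-counterpart---one checks that $\mathrm{supp}(x_0)$ and $\mathrm{supp}(z_0)$ contain $B_{1/r}$. The inequalities that must close for this are of the type $r_i(r_1+r_2)\le1$, which hold exactly because $r_1,r_2<\sqrt2/2$; this is the sole role of the constant $\sqrt2$ appearing in the definition of $\Theta$ and in \eqref{distance}, and it is the same truncation that disposes of the complementary case $r_1+r_2\ge\sqrt2/2$ at the outset.
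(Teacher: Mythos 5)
Your overall architecture coincides with the paper's: non-negativity and symmetry are immediate from right invariance, Lemma \ref{G}(a) and the class-invariance of $\Delta$; the separation axiom is exactly the paper's argument (extract witnesses $g_\delta$, restrict them to a fixed bounded patch via (G$_2$)--(G$_3$), deduce $g_\delta\to I\!d$ from ($\Theta_1$), ($\Theta_3$), and invoke the closedness axiom (G$_4$)); and for the triangle inequality you form the same objects $y_0=y_1\cap y_2$, $x_0=g^{-1}(y_0)$, $z_0=h(y_0)$, $hg$, and close the size estimate with ($\Theta_2$) exactly as in the paper. The problem is the step you yourself single out as delicate, namely $x_0\in x[[B_{1/r}]]$ and $z_0\in z[[B_{1/r}]]$: there the argument you sketch does not go through in this setting. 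You propose to use the Euclidean region $g^{-1}(B_{1/r_1})$ and its $h$-counterpart, but an element $g\in G[x_1]$ is not, in general, a transformation of $\mathbb{R}^n$ --- for piecewise rigid motions it is a tuple of isometries, one per tile, and the paper explicitly allows groups $G[x]$ not induced by any map of $\mathbb{R}^n$ --- so ``$g^{-1}(B_{1/r_1})$'' is not a defined object; the only support-tracking tools available are (G$_5$) and Lemma \ref{i}.

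The actual resolution requires two ingredients you omit. First, one must normalize $r_1\le r_2$ (the paper's $a\le b$); this is not cosmetic, since the two patches are treated by genuinely different arguments and the ordering decides which is which. With this ordering, $y_0\in y[[B_{1/r_2}]]$ is pulled back by $g$, whose quality parameter is the \emph{smaller} number $r_1$, so Lemma \ref{i} gives $\mathrm{supp}(x_0)\supseteq B_{1/r_2-\theta(1/r_2,\|g\|)}\supseteq B_{1/r_2-r_1}\supseteq B_{1/r}$, the last inclusion being your inequality $r_2(r_1+r_2)\le 1$ (this is \eqref{g}). Without the ordering this fails: for $r_1=1/2$, $r_2=1/10$ one only gets $\mathrm{supp}(x_0)\supseteq B_{3/2}$ while $1/r=5/3$. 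Second, for $z_0$ one cannot push $y_0$ forward through Lemma \ref{i} at all, since $h$ has the \emph{larger} quality parameter $r_2$ and $1/r_2-r_2\ge 1/r$ may fail when $r_1\ll r_2$. The paper instead pushes forward the \emph{complement} $y_2\setminus y_0$, whose support lies outside the large ball $B_{1/r_1}$, and applies (G$_5$) directly to conclude that $\mathrm{supp}\big(h(y_2\setminus y_0)\big)$ avoids $B_{1/r_1-\theta(1/r_1,\|h\|)}\supseteq B_{1/r_1-r_2}\supseteq B_{1/r}$ (inequality \eqref{h}, i.e.\ $r_1(r_1+r_2)\le 1$); subtracting this from $h(y_2)\in z[[B_{1/r_2}]]$ yields $z_0\in z[[B_{1/r}]]$. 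This complement-plus-(G$_5$) argument is the missing idea, and no variant of Lemma \ref{i} applied to $y_0$ itself can replace it.
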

\begin{proof}
  Clearly $d$ is non-negative and symmetric. Let us prove that the triangle inequality holds. Take $x,y,z\in Y$ with $0<d(x,y)\leq d(y,z)$ and
  $d(x,y)+d(y,z)<\sqrt{2}/2.$  Choose $\epsilon>0$ such that $\epsilon+d(x,y)+d(y,z)<\sqrt2/2,$
  and put $a=d(x,y)+\epsilon/2$ and $b=d(y,z)+\epsilon/2$. Then, by definition of $d$, there are $x'\in x[[B_{1/a}]]$, $y'\in y[[B_{1/a}]]$ and $g\in G[x']$ with $\theta(\Delta(x',a),\|g\|_{_{G[x']}})\leq a$ such that $g(x')=y'$. Similarly, there are $y''\in y[[B_{1/b}]]$, $z''\in z[[B_{1/b}]]$, $h\in G[y'']$ with $\theta(\Delta(y'',b),\|h\|_{_{G[y'']}})\leq b$ such that $h(y'')=z''$.

  Let $y_0=y'\cap y''\in y[[B_{1/b}]]$. By (G$_2$), we can define  $x_0=g^{-1}(y_0)$, $z_0=h(y_0)$, and we have $x_0\subseteq x'$ and $z_0\subseteq z''$.
Put $c=a+b$. Since $0<c<\sqrt2/2$, and taking account ($\Theta_1$), it turns out that
\begin{align}\label{g}\frac1b\geq \frac1c+a\geq \frac1c + \theta(\Delta(x',a),\|g\|_{_{G[x']}})\geq \frac1c + \theta(1/b,\|g\|_{_{G[x']}}).\end{align}  Then, by Lemma \ref{i} and (a) of Lemma \ref{G}, it follows from \eqref{g} that $x_0\in x[[B_{1/c}]]$. On the other hand, if $y''\setminus y_0\neq \emptyset$, we must have $\mathrm{supp}(y''\setminus y_0)\subset \mathbb{R}^n\setminus B_{1/a}$ and, by ($\Theta_1$) and (G$_3$),
 \begin{align}\label{h}\frac1a\geq \frac1c+b\geq \frac1c + \theta(\Delta(y'',b),\|h\|_{_{G[y'']}})\geq \frac 1c+\theta(1/a,\|h\|_{_{G[y''\setminus y_0]}}).\end{align}
Then, by (G$_5$), it follows from  \eqref{h} that $\mathrm{supp}\big(h(y''\setminus  y_0)\big)\subset \mathbb{R}^n\setminus B_{1/c}.$ Hence, since $h(y'')\in z[[B_{1/b}]]\subseteq z[[B_{1/c}]]$, we see that $h(y_0)\in z[[B_{1/c}]]$. If $y''\setminus y_0=\emptyset$, then $y_0=y''$ and   $h(y_0) \in z[[B_{1/c}]]$.

Since $hg(x_0)=z_0$ and, by (b) of Lemma \ref{G},
$\|hg\|_{_{G[x_0]}}\leq \|g\|_{_{G[x_0]}}+\|h\|_{_{G[x_0]}},$ then, by ($\Theta_1$) and ($\Theta_2$), $$\theta(\Delta(x_0,c),\|hg\|_{_{G[x_0]}})\leq \theta(\Delta(x',a),\|g\|_{_{G[x']}})+\theta(\Delta(y'',b),\|h\|_{_{G[y'']}})\leq a+b,$$ and thus
$d(x,z)\leq a+b =d(x,y)+d(y,z)+\epsilon,$ with $\epsilon >0$ arbitrarily small. Hence the triangle inequality holds.

Finally, we want to prove that
$d(x,y)=0$ if, and only if, $x=y$. Clearly, if $x=y$ then $d(x,y)=0$. Assume now, for a contradiction, that $x\neq y$ and $d(x,y)=0$. Take  a patch $x'\subset x$, with $\mathrm{supp}(x')\subset B_{1/r}$ for some $r>0$, such that $x'\nsubseteq y$. On the other hand, by definition of $d$, for each $\delta>0$ there are $x_\delta\in x[[B_{1/\delta}]]$, $y_\delta\in y[[B_{1/\delta}]]$ and $g_\delta\in G[x_\delta]$, with $\theta\big(\Delta(x_\delta,\delta),\|g\|_{_{G[x_\delta]}}\big)<\delta$, such that $y_\delta=g_\delta(x_\delta)$. Take $\delta< r$. In this case, since $x'\subset x_\delta$, by (G$_2$) we have $g_\delta(x')\subset y_\delta\subset y$. Hence $g_\delta\in G{[x', y]}$.   At the same time,  by ($\Theta_1$) and (G$_3$), we have, for any $\sqrt{2}<s< \Delta(x_\delta,\delta) $,
 $$\theta\big(s,\|g_\delta\|_{_{G[x']}}\big)\leq \theta\big(s,\|g_\delta\|_{_{G[x_\delta]}}\big)\leq \theta\big(\Delta(x_\delta,\delta),\|g_\delta\|_{_{G[x_\delta]}}\big)< \delta,$$
 which, by ($\Theta_1$) and ($\Theta_3$),  implies that $g_\delta\to I\!d_{G{[x']}}$ as $\delta\to 0$. By (G$_4$),  this would imply that $I\!d_{G{[x']}}\in G[x',y]$, that is, $x'\subset y$, which is a contradiction.
\end{proof}
\begin{prop}\label{complete}
 $(Y,d)$ is complete if, for all $x\in\mathcal{Y}$ with bounded support, the following conditions hold:
 \begin{itemize}
 \item[(C$_1$)] $\gamma(x,[x])\subseteq G[x]$ is complete with respect to the restriction of $d_{G[x]}$.
  \end{itemize}

\end{prop}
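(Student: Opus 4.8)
The plan is to prove sequential completeness: given a Cauchy sequence in $(Y,d)$, I build its limit over each ball as a limit of \emph{admissible} perturbations --- which is exactly where (C$_1$) enters --- and then glue these limit patches into a single tiling of $\R^n$ by means of ($\mathcal{Y}_2$).

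First I pass to a rapidly Cauchy subsequence $(x_n)$. Fix $s_0>\sqrt2$; by ($\Theta_1$) and ($\Theta_3$) the map $\theta(s_0,\cdot)$ is strictly increasing, continuous and vanishes at $0$, so it admits a continuous inverse $\phi$ with $\phi(0)=0$. I choose the subsequence so that $d(x_n,x_{n+1})<r_n$ with $r_n\downarrow0$, $\sum_n r_n<\infty$ and $\sum_n\phi(r_n)<\infty$. For each $n$, the definition \eqref{distance} of $d$ furnishes patches $x_n'\in x_n[[B_{1/r_n}]]$, $y_n\in x_{n+1}[[B_{1/r_n}]]$ and $g_n\in\gamma(x_n',y_n)$ with $\theta(\Delta(x_n',r_n),\|g_n\|_{G[x_n']})\le r_n$. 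Since $\Delta(x_n',r_n)\ge 1/r_n\ge s_0$ for $n$ large and $\theta^{b}$ is increasing in its first argument, this gives $\theta(s_0,\|g_n\|)\le r_n$, whence $\|g_n\|\le\phi(r_n)$ and $\sum_n\|g_n\|<\infty$.

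The core step is the construction of a limit patch over a fixed ball. Fix $R>0$ and let $u_N$ be the sub-patch of tiles of $x_N$ meeting $B_R$. Restricting each $g_n$ through the homomorphism $\iota$ of (G$_2$) to the part covering $B_R$ produces perturbations $\bar g_n$ with $\|\bar g_n\|\le\|g_n\|$ by (G$_3$); since consecutive patches lie in one equivalence class (transitivity is built into ($\Gamma_2$)), the partial products $\bar h_{N,n}:=\bar g_{n-1}\cdots\bar g_N$ all live in the single group $G[u_N]$, and the right invariance (G$_1$) together with Lemma \ref{G}(b) gives $d_{G[u_N]}(\bar h_{N,n+1},\bar h_{N,n})=\|\bar g_n\|$, so $(\bar h_{N,n})_n$ is Cauchy. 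By (C$_1$) its limit lies in $\gamma(u_N,[u_N])$, hence is an admissible perturbation, and $v^{(R)}:=\lim_n\bar h_{N,n}(u_N)\in\mathcal{Y}$ is a genuine limit patch. Running $R$ through an exhausting sequence $R_m\uparrow\infty$ with one fixed choice of $(g_n)$, the functoriality of $\iota$ and the uniqueness clause ($\Gamma_3$) make the $v^{(R_m)}$ coherent under inclusion, so ($\mathcal{Y}_1$) and ($\mathcal{Y}_2$) assemble them into a tiling $x\in Y$.

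Finally, for convergence the element $\bar h_{N,\infty}\bar h_{N,n}^{-1}$ relates the $B_R$-patch of $x_n$ to that of $x$, and right invariance bounds its size by $\sum_{k\ge n}\|\bar g_k\|\to0$, so feeding this into \eqref{distance} yields $d(x_n,x)\to0$; since a Cauchy sequence with a convergent subsequence converges, the whole sequence converges to $x$. I expect the main obstacle to be the support bookkeeping rather than the metric estimates: by Lemma \ref{i} and (G$_5$) each perturbation erodes the guaranteed covered ball by at most $\theta(\cdot,\|g_n\|)\le r_n$, so I must start from patches covering a ball slightly larger than $B_R$ and invoke $\sum_n r_n<\infty$ to keep the cumulative loss harmless, so that the limit patches still exhaust $\R^n$; simultaneously I must ensure each partial product stays inside $\gamma(u_N,[u_N])$ --- matching the slightly mismatched boundaries of $\bar g_n(u_n)$ and $u_{n+1}$ by passing to intersections as in the triangle-inequality argument --- so that (C$_1$) genuinely applies and the limit is a true patch and not merely an abstract group element.
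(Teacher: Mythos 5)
Your argument is correct and follows essentially the same route as the paper's proof: pass to a rapidly decreasing subsequence, form the telescoping products of the restricted perturbations $g_n$, show these partial products are Cauchy via right invariance of $d_{G[\cdot]}$ and the summability forced by $\theta$, invoke (C$_1$) to obtain admissible limit perturbations, glue the resulting increasing family of limit patches into a tiling by ($\mathcal{Y}_2$), and bound $d(x_n,x)$ by the tail sums. The one piece of bookkeeping you defer --- ensuring each $\bar g_n(u_n)$ sits inside the next chosen patch so that the products remain in a single $\gamma(\cdot,[\cdot])$ --- is handled in the paper simply by choosing the subsequence so that $\Delta(x'_n,s_n)\leq 1/s_{n+1}$, which forces $g_n(x'_n)\subseteq x'_{n+1}$.
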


\begin{proof}
  Consider a Cauchy sequence $\{x_n\}_{n\in \mathbb{N}}$ of tilings of $\mathbb{R}^n$ in $Y$ and consider a sequence $\{s_n\}_{\in \mathbb{N}}$ such that $d(x_n,x_{n+1})<s_n$. By definition of $d$, for each $n$ there are $x'_n\in x_n[[B_{1/{s_n}}]]$ and $g_n\in G[x'_n]$
  such that $\theta(\Delta(x'_n,s_n),\|g_n\|_{_{G[x'_n]}})\leq s_n$  and $g_n(x'_n)\in x_{n+1}[[B_{1/{s_n}}]].$ Without loss of generality, by passing to a subsequence if necessary, we can assume that the sequence $\{s_n\}_{n\in\mathbb{N}}$ is rapidly decreasing so that $\sum s_n<\infty$ and $\Delta(x'_n,s_n)\leq 1/s_{n+1}$. In particular, we have $g_n(x'_n)\subseteq x'_{n+1}.$

  Given $n<m$ we define $h_{n,m}=g_mg_{m-1}\ldots g_{n+1}g_n\in \gamma(x'_n,[x'_n])\subseteq G[x'_n]$. Of course, here we are  denoting $\iota_{[x'_{n+k},x'_n]}(g_{n+k})$ by $g_{n+k}$. For $n<m'<m$, by right invariance we have
  \begin{align}\nonumber
  d_{G[x'_n]}(h_{n,m},h_{n,m'})&=d_{G[x'_n]}(g_mg_{m-1}\ldots g_{n+1}g_n,g_{m'}g_{m'-1}\ldots g_{n+1}g_n)\\& =d_{G[x'_n]}(g_mg_{m-1}\ldots g_{m'+1},I\!d_{G[x'_n]})\nonumber\\&=\|g_mg_{m-1}\ldots g_{m'+1}\|_{_{G[x'_n]}}\nonumber\\&\leq
 \sum_{i=m'+1}^m\|g_i\|_{_{G[x'_n]}}.\label{cona1}
  \end{align}
  On the other hand, since $\Delta(x'_n,s_n)<\Delta(x'_{n+k},s_{n+k})$ for all $k>0$, we have
  $$\sum_{i= n}^\infty\theta(\Delta(x'_n,s_n),\|g_i\|_{_{G[x'_n]}})\leq \sum_{i= n}^\infty s_i <\infty,$$
  then
  \begin{align*}
  \theta\big(\Delta(x'_n,s_n),\sum_{i=m'+1}^m\|g_i\|_{_{G[x'_n]}}\big)\leq \sum_{i=m'+1}^m
  \theta(\Delta(x'_n,s_n), \|g_i\|_{_{G[x'_n]}})\to 0\,\,\,\,\textrm{as ${m,m'\to \infty}$}.
  \end{align*}
 By $(\Theta_1)$ and $(\Theta_3)$, this implies that
 \begin{equation}\label{cona2} \sum_{i=m'+1}^m\|g_i\|_{_{G[x'_n]}}\to 0\,\,\,\,\textrm{as ${m,m'\to \infty}$}\end{equation}
From \eqref{cona1} and \eqref{cona2}, we see that $\{h_{n,m}\}_{m\in\mathbb{N}}$ is a Cauchy sequence. Hence, by the completeness of $\gamma(x'_n,[x'_n])$, $\{h_{n,m}\}_{m\in\mathbb{N}}$ is convergent to a certain element $h_n\in \gamma(x'_n,[x'_n])$.

   By right invariance of $d_{G[x'_n]}$, the right multiplication by an element is continuous in ${G[x'_n]}$. At the same time, by (c) of Lemma \ref{G}, $\iota_{[x'_{n+1},x'_n]}$ is a continuous homomorphism. Hence
   $h_n=h_{n+1}g_n$. Consequently,
  $h_n(x'_n)=h_{n+1}g_n(x'_n)\subseteq h_{n+1}(x'_{n+1}).$
  This implies that $\{h_n(x'_n)\}_{n\in\mathbb{N}}$ is an increasing sequence in $\mathcal{Y}$, so, by $(\mathcal{Y}_2)$, we can define a tiling $x=\bigcup_nh_n(x'_n)\in Y$. Next we prove that $\{x_n\}_{n\in\mathbb{N}}$ converges to $x$.

Since $1/s_n\leq \Delta(x'_n,s_n)$, it follows from  ($\Theta_1$) and   ($\Theta_2$)  that
\begin{align*}
   \nonumber\theta(1/s_n,\|h_{n,m}\|_{_{G[x'_n]}})\leq \sum_{i=n}^m  \theta(1/s_i,\|g_i\|_{_{G[x'_i]}})\leq \sum_{i=n}^m s_i,\label{qq}\end{align*} and consequently,  by continuity,
we have
$$\theta(1/s_n,\|h_{n,m}\|_{_{G[x'_n]}})\to \theta(1/s_n,\|h_{n}\|_{_{G[x'_n]}})\leq \sum_{i=n}^\infty s_i , \,\,\,\,\textrm{as ${m\to \infty}$.}$$
Hence,   by Lemma \ref{i},   $h_{n}(x'_n)\in x[[B_{t_{n}} ]],$  where $t_n=1/s_n-\sum_{i=n}^\infty s_i$. Moreover, since
$\Delta(x'_n,1/t_{n})\leq \Delta(x'_n,s_n)$, it follows that,
taking $m\to \infty$ and using the continuity of $\theta$,
 \begin{equation}\label{limt}
 \lim_{n\to\infty} \theta\big(\Delta(x'_n,1/t_n),\|h_{n}\|_{_{G[x'_n]}}\big)=0.
  \end{equation}
Finally, from \eqref{limt} we conclude that
$$d(x,x_n)\leq \max\big\{1/{t_n}, \theta\big(\Delta(x'_n,1/t_n),\|h_{n}\|_{_{G[x'_n]}}\big)\}\rightarrow 0, \,\,\,\,\textrm{as ${n\to \infty}$},$$ since $\lim_n t_n=\infty$. \end{proof}
Now, associate to each equivalence  class $[x]$ of elements in $Y$ an element  $$g_{[x]}\in \bigcap_{z\in [x]} \gamma(z,[x])\subseteq G[x]$$ in such a way that, if $x'\subseteq x$, $y'\subseteq{y}$ and $[x']=[y']$, then $\iota_{[x,x']}(g_{[x]})=\iota_{[y,y']}(g_{[y]})$. This defines a map
\begin{equation}\label{map}
g:Y\to Y, \,\,\,\,\textrm{$g(z)=g_{[x]}(z)$ if $z\in [x]$},
\end{equation}
and we have:
\begin{prop}\label{continuity}
  Suppose that, for each $x\in Y$, the left multiplication by $g_{[x]}\in G[x]$ is continuous in $(G[x],d_{G[x]})$. Moreover, assume that, for each $x\in Y$, there exists $\epsilon<1$ such that $\delta\theta(1/\delta,\|g_{[x]}\|_{G[x]})<\epsilon$ for all sufficiently small $\delta>0$. Then the map $g:(Y,d)\to(Y,d)$ defined by \eqref{map}  is continuous.
\end{prop}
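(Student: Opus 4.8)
The plan is to prove continuity at each fixed point $x\in Y$ directly from the definition \eqref{distance}: given a target $\eta>0$, I will exhibit, for every $z$ sufficiently close to $x$, an explicit pair of patches witnessing $d(g(x),g(z))<\eta$. Write $M=\|g_{[x]}\|_{G[x]}$ and let $\epsilon<1$ be as in the second hypothesis. First I would unwind $r:=d(x,z)$: by \eqref{distance} there are $x'\in x[[B_{1/r}]]$, $z'\in z[[B_{1/r}]]$ and $h\in\gamma(x',z')$ with $\theta(\Delta(x',r),\|h\|_{G[x']})\le r$ (up to an arbitrarily small enlargement of $r$, absorbed at the end). Since $h$ transforms $x'$ into $z'$ we have $[x']=[z']$, hence $G[x']=G[z']$, and the compatibility imposed on the family $g_{[\cdot]}$ produces a single element
$$g_0:=\iota_{[x,x']}(g_{[x]})=\iota_{[z,z']}(g_{[z]})\in\gamma(x',[x'])=\gamma(z',[z']).$$
By (G$_2$) this gives $g_0(x')\subseteq g_{[x]}(x)=g(x)$ and $g_0(z')\subseteq g_{[z]}(z)=g(z)$, and by (G$_3$) the uniform bound $\|g_0\|_{G[x']}\le M$, independent of $z$.

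Next I would produce the small perturbation linking the two witness patches. Since $g_0^{-1}$ transforms $g_0(x')$ into $x'$, then $h$ into $z'$, then $g_0$ into $g_0(z')$, axioms ($\Gamma_1$)--($\Gamma_2$) show $\tilde g:=g_0 h g_0^{-1}\in\gamma(g_0(x'),g_0(z'))$. Right invariance (G$_1$) gives $\|\tilde g\|_{G[x']}=d_{G[x']}(g_0 h g_0^{-1},I\!d)=d_{G[x']}(g_0 h,g_0)$. As $r\to0$ one has $\|h\|_{G[x']}\to0$ (because $\theta(1/r,\|h\|_{G[x']})\le\theta(\Delta(x',r),\|h\|_{G[x']})\le r$, using $\Delta(x',r)\ge 1/r$ and that $\theta_s$ is strictly increasing with $\theta(s,0)=0$), so $h\to I\!d$; the first hypothesis, continuity of left translation by $g_0$ in $G[x']$, then forces $g_0 h\to g_0$ and hence $\|\tilde g\|_{G[x']}\to0$.

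It then remains to check that $g_0(x')$ and $g_0(z')$ are admissible witnesses, i.e. that they cover a large ball about the origin. Applying Lemma \ref{i} to the full tiling $x$ (whose image $g(x)=g_{[x]}(x)$ tiles all of $\mathbb{R}^n$, so $B_s\subseteq\mathrm{supp}(x)\cap\mathrm{supp}(g(x))$ for every $s$) with the patch $x'$ gives $B_{1/r-\theta(1/r,M)}\subseteq\mathrm{supp}(g_0(x'))$; the second hypothesis, $r\,\theta(1/r,M)<\epsilon$, rewrites this as $B_{(1-\epsilon)/r}\subseteq\mathrm{supp}(g_0(x'))$, whose radius tends to $\infty$. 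The analogous bound for $g_0(z')$ I would obtain the same way from the full tiling $z$ and $g_{[z]}$, invoking the second hypothesis for $z$. Setting $\rho=r/(1-\epsilon)\to0$, one then has $B_{1/\rho}\subseteq\mathrm{supp}(g_0(x'))\cap\mathrm{supp}(g_0(z'))$, so $g_0(x')\in g(x)[[B_{1/\rho}]]$ and $g_0(z')\in g(z)[[B_{1/\rho}]]$; moreover $\Delta(g_0(x'),\rho)=\Delta(x',\rho)\le\Delta(x',r)$, and since $\|\tilde g\|_{G[x']}\to0$ one arranges $\theta(\Delta(g_0(x'),\rho),\|\tilde g\|_{G[x']})\le\rho$ for $r$ small. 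Feeding $g_0(x')$, $g_0(z')$ and $\tilde g$ into \eqref{distance} yields $d(g(x),g(z))\le\rho\to0$, which is the desired continuity.

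I expect the main obstacle to be the uniform coverage of a fixed ball $B_{1/\rho}$ by the perturbed patch $g_0(z')$ as $z\to x$. For $g_0(x')$ the coverage is controlled entirely by the fixed data $M$ and the fixed $\epsilon$ of $x$; but the bound for $g_0(z')$ produced from the full tiling $z$ involves $\|g_{[z]}\|_{G[z]}$, whose global size is not a priori controlled by $M$, and the number furnished by the second hypothesis for $z$ could, a priori, degrade as $z$ varies. Making the argument uniform therefore requires using the compatibility (local determination) of the family $g_{[\cdot]}$ -- the size of $g_{[z]}$ is governed by the sizes of its restrictions to patches, which are pinned down by their equivalence classes and hence bounded in terms of $M$ -- so that the same $\epsilon$ and the same smallness threshold serve all nearby $z$. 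Dovetailing this with the simultaneous requirement $\theta(\Delta(g_0(x'),\rho),\|\tilde g\|_{G[x']})\le\rho$, in which the admissible radius $\Delta(g_0(x'),\rho)$ grows while $\|\tilde g\|_{G[x']}$ shrinks, is the technical heart of the proof, and is exactly what the two standing hypotheses of Proposition \ref{continuity} are tailored to reconcile.
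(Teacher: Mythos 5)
Your overall strategy coincides with the paper's: unwind $d(x,z)$ to obtain patches $x'$, $z'$ and a small $h\in\gamma(x',z')$, use the compatibility of the family $g_{[\cdot]}$ to produce the single restricted element $g_0=\iota_{[x,x']}(g_{[x]})=\iota_{[z,z']}(g_{[z]})$, conjugate to get $g_0hg_0^{-1}\in\gamma(g_0(x'),g_0(z'))$ and make it small via right invariance together with continuity of left multiplication, and control the supports of the perturbed patches via Lemma \ref{i} and the hypothesis $\delta\,\theta(1/\delta,\|g_{[x]}\|_{G[x]})<\epsilon$. The paper phrases this sequentially (with $y'_\delta=\bigcap_{n>n_\delta}y''_n$ and the conjugate $h_\delta g_nh_\delta^{-1}$), but the skeleton is identical.

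There is, however, a genuine gap exactly where you locate the ``main obstacle'', and the repair you sketch does not work. To get $B_{(1-\epsilon)/r}\subseteq\mathrm{supp}\big(g_0(z')\big)$ you apply Lemma \ref{i} with ambient tiling $z$ and then invoke the second hypothesis \emph{at} $z$, arguing that the size of $g_{[z]}$ is ``governed by the sizes of its restrictions to patches'' and hence bounded by $M$. Nothing in the axioms supports this: (G$_3$) bounds restrictions from above by the global norm, never the reverse, and the compatibility condition on the family $g_{[\cdot]}$ pins down only the restrictions, not $\|g_{[z]}\|_{G[z]}$; likewise the constant $\epsilon<1$ and the smallness threshold in the second hypothesis are allowed to vary from point to point. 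The paper avoids the issue altogether: it uses one radius $1/\delta'=1/\delta-\theta\big(1/\delta,\|h_\delta\|_{G[y'_\delta]}\big)$ for \emph{both} perturbed patches, where $h_\delta$ is the single element of the common group $G[x'_n]=G[y'_\delta]$, and bounds $\|h_\delta\|_{G[y'_\delta]}\leq\|g_{[y]}\|_{G[y]}$ by (G$_3$) applied at the fixed limit point only, so that only the hypothesis at that fixed point ever enters. A second, smaller defect is structural: you tie both the patch radius $\rho$ and the smallness of $\|\tilde g\|$ to the single parameter $r=d(x,z)$, so the needed inequality $\theta\big(\Delta(g_0(x'),\rho),\|\tilde g\|_{G[x']}\big)\leq\rho$ pits $\Delta(x',\rho)$, which blows up as the patch grows with $r\to0$, against $\|\tilde g\|$, which shrinks, with no a priori winner. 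The paper decouples the two limits: fix $\delta$ (hence a fixed patch of the limit tiling and a fixed $\Delta(y'_\delta,\delta')$), let the moving point approach the limit to force $\|h_\delta g_nh_\delta^{-1}\|\to I\!d$, conclude $d(g(x_n),g(y))\leq\delta'$ for all large $n$, and only afterwards send $\delta\to0$. Your argument needs the same two-step limit to close.
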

\begin{proof}
  Consider a sequence $\{x_n\}_{n\in\mathbb{N}}$ of elements in $Y$ convergent to $y\in Y$. This means that, for each $n$ sufficiently large, there exist $s_n>0$, $x''_n\in x_n[[B_{1/s_n}]]$, $y''_n\in y[[B_{1/s_n}]]$
  and $g_n\in{G}[y''_n]$ such that $\lim s_n=0$, $y''_n=g_n(x''_n)$ and $\theta(\Delta(x''_n,s_n),\|g_n\|_{_{G[y''_n]}})\leq s_n$. Observe that, by ($\Theta_3$), we must have $\lim\|g_n\|_{_{G[y''_n]}}=0$.

   Given $\delta>0$, take $N_\delta,n_\delta>0$ such that, for all $n>n_\delta$,
  we have $1/s_n>N_\delta$ (hence $y''_n\in y[[B_{N_\delta}]]$) and
  $$1/\delta<N_\delta-\theta\big(\Delta(x''_n,s_n),\|g_n\|_{G[y''_n]} \big).$$
  Set $y'_\delta=\bigcap_{n>n_\delta}y''_n$ and $x'_n=g_n^{-1}(y'_\delta)$.
   By Lemma \ref{i}, $x'_n\in x_n[[B_{1/\delta}]]$.
  Since $[x'_n]=[y'_\delta]$, we can define $$h_\delta:=\iota[x_n,x'_n](g_{[x_n]})=\iota[y,y'_\delta](g_{[y]}).$$
 Observe that $h_\delta g_n h_\delta^{-1}(h_\delta(x'_n))=h_\delta(y'_\delta)$ and, by (G$_2$) and Lemma \ref{i}, $h_\delta(x'_n)\in g(x_n)[[B_{1/{\delta'}}]]$ and  $h_\delta(y'_\delta)\in g(y)[[B_{1/{\delta'}}]]$, with
 $1/\delta'=1/\delta-\theta(1/\delta,\|h_\delta\|_{_{G[y'_\delta]}})$, that is
 $$\delta'=\frac{\delta}{1-\delta\,\theta(1/\delta,\|h_\delta\|_{_{G[y'_\delta]}})}\leq \frac{\delta}{1-\delta\,\theta(1/\delta,\|g_{[y]}\|_{_{G[y]}})}\leq \frac{\delta}{1-\epsilon},$$
   for all sufficiently small $\delta>0$. Then
   \begin{align}\label{dn}\nonumber d(g(x_n),g(y))&\leq \max\left\{\delta',\theta\big(\Delta(h_\delta(x'_n),\delta') ,\|h_\delta g_n h_\delta^{-1}\|_{_{G[y'_\delta]}}\big)\right\}
   \\&= \max\left\{\delta',\theta\big(\Delta (y'_\delta,\delta') ,\|h_\delta g_n h_\delta^{-1}\|_{_{G[y'_\delta]}}\big)\right\}
   .\end{align}
Since $\lim\|g_n\|_{_{G[y''_n]}}=0$ and $\|g_n\|_{_{G[y'_\delta]}}\leq \|g_n\|_{_{G[y''_n]}}$, we also have $\lim\|g_n\|_{_{G[y'_\delta]}}=0$ and, consequently, $\lim g_n=I\!d_{G[y'_\delta]}$. On the other hand, left multiplication by $h_\delta$ is continuous and, by the right invariance of $d_{G[x'_\delta]}$, right multiplication by any element of $G[y'_\delta]$ is also continuous. Then $\lim h_\delta g_n h_\delta^{-1}= I\!d_{G[y'_\delta]}$. Hence, for $n$ sufficiently large, from  inequality \eqref{dn} we see that  $d(g(x_n),g(y))\leq \delta '$. But $\delta'\to 0$ as $\delta\to 0$. Hence
 $\{g(x_n)\}_{n\in\mathbb{N}}$ converges to $g(y)\in Y$.
\end{proof}

\begin{eg}\label{ex3} Consider the setting of Examples \ref{exampleG} and \ref{exampleG2}. Given $g\in G$, assume that $g\in \gamma(x,[x])$ whenever $x\in Y$.
 When $G=\mathcal{I}$,  it follows directly from the previous proposition that the isometry  $g\in \mathcal{I}$ defines a continuous transformation on $Y$.
On the other hand, when $G=\mathcal{H}$, and taking account Remark \ref{rem1}, $g$ must be a translation.
If $\|g\|_{\mathcal{H}}<1$, then it is also clear that $g$ defines a continuous transformation on $Y$. The conditions of the proposition are not fulfilled if $\|g\|_{\mathcal{H}}=1$. However, since $g$ can always be  written as the product of a finite number translations,  $g=g_1\ldots g_n$ with $\|g_i\|_{\mathcal{H}}<1$, we conclude that the corresponding transformation on $Y$, as the composition of a finite number of continuous transformations, is also continuous.
\end{eg}

Next we shall be concerned with the compactness of $(Y,d)$.
 Given a compact $K\subset \mathbb{R}^n$, we say that $x'\in\mathcal{Y}$ is \emph{$K$-minimal} if: $K\subseteq \mathrm{supp}(x')$; given $x''\in\mathcal{Y}$  such that $K\subseteq \mathrm{supp}(x'')$ and $x''\subseteq x'$, then $x''=x'$. We denote by ${\mathcal{Y}_{\min}}(K)$ the subset of all $K$-minimal elements of  $\mathcal{Y}$, which is obviously nonempty.

 \begin{prop}Suppose that $\mathcal{Y}$ satisfies:
\begin{itemize}
       \item[(C$_2$)] for all compact $K\subset \mathbb{R}^n$, there exists a finite subset $\mathcal{A}_K\subset \mathcal{Y}$ such that, for all $x'\in{\mathcal{Y}_{\min}}(K)$, there are $y'\in\mathcal{A}_K$ and $g\in G[y']$ with $x'=g(y')$;
       \item[(C$_3$)]  for all compact $K\subset \mathbb{R}^n$ and $x'\in {\mathcal{Y}_{\min}}(K)$, $$G_{x'}(K)=\left\{g\in \gamma(x',[x']): K\subseteq \mathrm{supp}(g(x'))\right\}$$ is compact.
 \end{itemize}
Then the metric space    $(Y,d)$ is compact if it is complete.
 \end{prop}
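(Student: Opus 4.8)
The plan is to deduce compactness from completeness by proving that $(Y,d)$ is \emph{totally bounded}; since a complete, totally bounded metric space is compact, this will suffice. So I fix $\epsilon>0$, choose $0<r<\epsilon$ with $r<\sqrt2/2$, and set $K=B_{1/r}$, aiming to build a finite $r$-net. First I would record what (C$_2$) and (C$_3$) give at this scale. By (C$_2$) there are only finitely many equivalence classes of $K$-minimal patches: any $x'\in\mathcal{Y}_{\min}(K)$ satisfies $x'=g(y')$ with $y'\in\mathcal{A}_K$, so $[x']=[y']$ and there are at most $|\mathcal{A}_K|$ such classes. Choosing one $K$-minimal representative $z_1,\dots,z_N$ from each class (each class contains one, namely a realized $x'$), axiom (C$_3$) applies to each $z_j\in\mathcal{Y}_{\min}(K)$ and makes $G_{z_j}(K)=\{h\in\gamma(z_j,[z_j]):K\subseteq\mathrm{supp}(h(z_j))\}$ compact, hence totally bounded in $(G[z_j],d_{G[z_j]})$.

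Second, I would attach to each tiling $x\in Y$ a local label. Let $p_x$ be a $B_{1/r}$-minimal patch of $x$ (a patch of bounded support, in $\mathcal{Y}$ by ($\mathcal{Y}_1$)). Then $p_x\in\mathcal{Y}_{\min}(K)$, so $[p_x]=[z_j]$ for a unique $j$ and $p_x=h(z_j)$ for some $h\in\gamma(z_j,p_x)$; since $K\subseteq\mathrm{supp}(p_x)$ we have $h\in G_{z_j}(K)$. Now cover each compact $G_{z_j}(K)$ by finitely many balls of radius $\rho$, centered at points $h_{j,1},\dots,h_{j,m_j}$, where $\rho>0$ is chosen small enough that $\theta\big(\Delta(z_j,r),2\rho\big)\le r$ for every $j$; this is possible because the finitely many numbers $\Delta(z_j,r)$ are finite by (G$_6$) and $\theta(s,\cdot)$ is continuous with $\theta(s,0)=0$ by ($\Theta_3$). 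The label of $x$ is then the pair $(j,i)$ with $h\in B(h_{j,i},\rho)$. For each label $(j,i)$ that actually occurs, pick one tiling $w_{j,i}\in Y$ carrying it; the resulting set $\{w_{j,i}\}$ is finite.

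Finally I would verify that $\{w_{j,i}\}$ is an $r$-net. Suppose $x$ and $w:=w_{j,i}$ share the label $(j,i)$, with $p_x=h(z_j)$, $p_w=h'(z_j)$, and $h,h'\in B(h_{j,i},\rho)$, so $d_{G[z_j]}(h,h')<2\rho$. Then $g:=h'h^{-1}$ transforms $p_x$ into $p_w$ (by ($\Gamma_1$) and ($\Gamma_2$)), i.e. $g\in\gamma(p_x,p_w)\subseteq G[p_x]=G[z_j]$, and by the right invariance (G$_1$), $\|g\|_{G[p_x]}=d_{G[z_j]}(h'h^{-1},I\!d)=d_{G[z_j]}(h',h)<2\rho$. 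Since $p_x$ and $z_j$ lie in the same class and both contain $B_{1/r}$, we have $\Delta(p_x,r)=\Delta(z_j,r)$, so by monotonicity ($\Theta_1$), $\theta\big(\Delta(p_x,r),\|g\|_{G[p_x]}\big)\le\theta\big(\Delta(z_j,r),2\rho\big)\le r$. With $p_x\in x[[B_{1/r}]]$ and $p_w\in w[[B_{1/r}]]$, the defining infimum \eqref{distance} yields $d(x,w)\le r<\epsilon$. Hence $\{w_{j,i}\}$ is a finite $\epsilon$-net, $(Y,d)$ is totally bounded, and being complete it is compact.

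The main obstacle I anticipate is the bookkeeping that lets (C$_2$) and (C$_3$) cooperate: (C$_3$) only guarantees compactness of the perturbation set for $K$-minimal patches, so the crucial point is to invoke it for the genuinely $K$-minimal $p_x$ and class representatives $z_j$, using (C$_2$) \emph{solely} for the finiteness of the classes rather than for the possibly non-minimal $y'\in\mathcal{A}_K$. The one other delicate estimate is the uniform choice of $\rho$, which must be made only after fixing the finitely many $z_j$, so that finiteness of $\Delta(z_j,r)$ (via (G$_6$)) together with the continuity of $\theta$ converts group-metric closeness $\|g\|<2\rho$ into the tiling-metric closeness $d(x,w)\le r$.
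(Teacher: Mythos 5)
Your proof is correct, but it reaches compactness by a different route than the paper. You establish \emph{total boundedness} directly: for a given $\epsilon$ you work at the single scale $K=B_{1/r}$, use (C$_2$) to reduce to finitely many equivalence classes of $K$-minimal patches with realized representatives $z_j$, use (C$_3$) to cover each compact $G_{z_j}(K)$ by finitely many $\rho$-balls, and convert group-metric closeness into tiling-metric closeness via the uniform bound $\theta(\Delta(z_j,r),2\rho)\le r$; compactness then follows from the general fact that a complete, totally bounded metric space is compact. The paper instead runs a sequential diagonalization: it fixes a sequence $r_k\to\infty$ of scales, at each scale extracts (via (C$_2$)) an infinite index set on which the $B_{r_k}$-minimal patches lie in one class and (via sequential compactness of $G_{x'_{m_k}(r_k)}(B_{r_k})$ from (C$_3$)) a further subsequence on which the relating group elements converge, and shows the diagonal subsequence is Cauchy, invoking completeness at the end. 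The two arguments use the hypotheses in exactly the same roles -- (C$_2$) for finiteness of classes of minimal patches, (C$_3$) for compactness of the perturbation sets attached to genuinely $K$-minimal base points, and right invariance plus the properties of $\theta$ to pass from $d_{G[z_j]}$-estimates to $d$-estimates -- but your version trades the paper's nested index-set bookkeeping for a one-scale finite-net construction, which is arguably cleaner; the paper's version has the minor advantage of producing the convergent subsequence explicitly. Your closing remarks correctly identify the one genuinely delicate point, namely that (C$_3$) must be applied to realized $K$-minimal patches rather than to the possibly non-minimal elements of $\mathcal{A}_K$, and this is handled the same way in the paper.
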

 \begin{proof}
   Let $\{x_n\}_{n\in\mathbb{N}}$ be a sequence in $Y$. We want to extract a subsequence  convergent to an element $x\in Y$. This is done by a standard diagonalization argument. Fix an increasing sequence of positive real numbers $\{r_n\}_{n\in\mathbb{N}}$ such that $\lim r_n=\infty$. Denote by $x'_{n}(r_k)$   the unique  $B_{r_{\!k}}$-minimal patch of $x_{n}$.
  By (C$_2$), since $\mathcal{A}_{\!B_{r_1}}$ is finite, there is an infinite subset $I_1\subseteq \mathbb{N}$, with $m_1=\min I_1$, such that, for each $n\in I_1$,  there is $g_{1,n}\in G_{x_{m_1}'\!(r_1)}(B_{r_1})$ satisfying $x'_{n}(r_1)=g_{1,n}\big(x'_{m_1}(r_1)\big)$.
 However, by (C$_3$), we can assume, by taking a subsequence if necessary, that the sequence $\{g_{1,n}\}_{n\in I_1}$ converges to some $g_1\in G_{x_{m_1}'\!(r_1)}(B_{r_1})$. Hence, there
 exists  $n_1\in I_1$  such that for $n>n_1$ and $n\in I_1$, we have
  $$\theta\big(\Delta(x_{n_1}'(r_1),1/r_1),\|g_{1,n}g_{1,n_1}^{-1}\|_{_{G[x_{m_1}'(r_1)]}}\big) \leq 1/{r_1}.$$ We can now proceed recursively in order to obtain, for each $k\in \mathbb{N}$, an infinite set $I_k$, with $I_k\subseteq I_{k-1}\subseteq \ldots \subseteq I_2\subseteq I_1\subseteq \mathbb{N}$,  such that, for each $n\in I_k$, there is $g_{k,n}\in G_{x'_{m_k}\!(r_k)}(B_{r_k})$  with $x'_{n}(r_k)=g_{k,n}\left(x'_{m_k}(r_k)\right)$, where $m_k=\min I_k$. Again, without loss of generality, we can assume that the sequence $\{g_{k,n}\}_{n\in I_k}$ converges to some $g_k\in G_{x'_{m_k}\!(r_k)}(B_{r_k})$, which means that  there exists  $n_k\in I_k$, with $n_k>n_{k-1}>\ldots >n_2>n_1$, such that, for $n>n_k$ and $n\in I_k$,  we have
  $$\theta\big(\Delta(x_{n_k}'(r_k),1/r_k),\|g_{k,n}g_{k,n_k}^{-1}\|_{_{G[x_{m_k}'(r_k)]}}\big)\leq  1/{r_k}.$$
Observe that $x'_{n_{k+1}}(r_k)=g_{k,{n_{k+1}}}g_{k,n_k}^{-1}\left(x'_{n_{k}}(r_k)\right).$
 Then
  \begin{align}\label{dec}
 d(x_{n_k},x_{n_{k+1}})& \leq \max\left\{1/r_k,\theta\big(\Delta(x_{n_k}'(r_k),r_k),\|g_{k,{n_{k+1}}}g_{k,n_k}^{-1}\|_{_{G[x_{m_k}'(r_k)]}}\big)    \right\}    \leq 1/{r_k}.\end{align}
 Consider the infinite subset $I'=\{n_1,n_2,\ldots\}\subseteq \mathbb{N}$. From \eqref{dec} we see that
 that $\{x_n\}_{n\in I'}$ is a Cauchy subsequence of $\{x_n\}_{n\in\mathbb{N}}$. Since $Y$ is complete, $\{x_n\}_{n\in I'}$ admits a subsequence  that converges to some $x\in Y$.
\end{proof}
\begin{eg}\label{ex4} Let $\mathcal{F}$ be a finite set of tiles and fix $G=\mathcal{I}$. Let $y$ be a tiling of $\mathbb{R}^n$  by direct isometric copies of tiles in $\mathcal{F}$. Define $\mathcal{Y}$ as follows: $x'\in \mathcal{Y}$ if $x'$  is a direct isometric copy of some $y'\subset y$. It is clear that $ \mathcal{Y}$ satisfies ($\mathcal{Y}_1$) and  ($\mathcal{Y}_2$).  With respect to the equivalence relation of Example \ref{exampleG}, assume that
 the number  $\#\mathcal{F}^{(2)}_{E_1}$ of equivalence classes $[x']$, with $x'\in\mathcal{Y}$ composed by two tiles in $\mathcal{F}$ and $\mathrm{supp}(x')$ connected,  is finite.
 Following \cite{LW, Rob}, in this case we say that $Y$ has \emph{finite local complexity under isometries}. For all $x\in\mathcal{Y}$,  $\gamma(x,[x])=\mathcal{I}$, which is complete with respect to  $d_{\mathcal I}$. Condition (C$_3$)  follows from the  continuity of the group action of $\mathcal{I}$ on $\mathbb{R}^n$ and
  (C$_2$) is a consequence of the finite local complexity under isometries property. Then  $(Y,d)$ is compact and the action of $\mathcal{I}$ on $Y$ is continuous. The space of classical Penrose kite and dart tilings of $\mathbb{R}^2$ fits in this case.

\begin{eg}
Let $\mathcal{F}$ be a finite set of tiles and fix $G=\mathcal{I}$. Given a subset $S\subset \mathbb{R}^n$, denote by $X_\mathcal{F}(S)$ the set of all tings of $S$ by direct isometric copies of tiles in $\mathcal{F}$. Define $\mathcal{Y}=\bigcup_SX_\mathcal{F}(S)$, which satisfies  ($\mathcal{Y}_1$) and  ($\mathcal{Y}_2$). With respect to the equivalence relation of Example \ref{exampleG2},  $\#\mathcal{F}^{(2)}_{E_2}$ is automatically finite, hence (C$_2$) is satisfied.
  For each $x\in \mathcal{Y}$ with bounded support, the completeness of $\gamma(x,[x])$ follows from  the continuity of the group action of $\mathcal{I}$ on $\mathbb{R}^n$ and from  the completeness of $\mathcal{I}[x]$ with respect to the metric $d_{\mathcal{I}[x]}$ given by \eqref{dP}.
 Again, condition (C$_3$)  follows from the  continuity of the group action of $\mathcal{I}$ on $\mathbb{R}^n$.
  Then, $(Y,d)$ is compact and the action of $\mathcal{I}$ on $Y$ is continuous.
\end{eg}

\end{eg}
\begin{eg} Let $\mathcal{F}$ be a finite set of tiles and fix $G=\mathcal{H}$. Given a subset $S\subseteq \mathbb{R}^n$, denote by $X_\mathcal{F}(S)$ the set of all tilings of $S$ by homothetic  copies of tiles in $\mathcal{F}$ with scale factor $\lambda \in [\frac12,1]$. Take $\mathcal{Y}= \bigcup_{S} X_\mathcal{F}(S)$, which satisfies ($\mathcal{Y}_1$) and  ($\mathcal{Y}_2$). Consider the setting of Example \ref{exampleG2}. For each $x\in \mathcal{Y}$ with bounded support, the completeness of $\gamma(x,[x])$ follows from  the continuity of the group action of $\mathcal{H}$ on $\mathbb{R}^n$ and from  the completeness of $\mathcal{H}[x]$ with respect to the metric $d_{\mathcal{H}[x]}$ given by \eqref{dP}. Since $\mathcal{F}$ is finite and $\lambda$ takes values in a compact interval, (C$_2$) holds. Finally, (C$_3$) follows from the continuity of the group action of $\mathcal{H}$ on $\mathbb{R}^n$.
Hence $(Y,d)$ is compact and the action of the translation group on $Y$ is continuous.
\end{eg}
\begin{eg}\label{QP} Let $Q$ and $P$ be the hypercubes in $\mathbb{R}^n$, centered at the origin, with edges $1$ and $1/3$, respectively. Let $\mathcal{T}$ be the group of translations. If $T\in \mathcal{T}$ is the translation by the vector $\vec{t}$, then $\|T\|_\mathcal{T}=\|\vec t\|$.  Given a subset $S\subseteq \mathbb{R}^n$, let $ \mathcal{X}_{Q,P}(S)$ the set of all tilings of $S$ by  two equivalent classes of tiles: tiles of the form  $T(P)$, with $T\in\mathcal{T}$; tiles of the form $T(Q)\setminus T'(P)$, with $\|T-T'\|_\mathcal{T}\leq 1/6$.
 Set $\mathcal{Y}=\bigcup_S\mathcal{X}_{Q,P}(S)$,  $G[P]=\mathcal{T}$ and $G[Q\setminus P]=\mathcal{T}\times \mathcal{T}$. Consider on $G[Q\setminus P]$  the distance $$d_{\mathcal{T}\times\mathcal{T}}\big((T,T'),(T_1,T_1')\big)=\max\{d_\mathcal{T}(T,T_1),d_\mathcal{T}(T',T'_1)\}.$$ The elements of $\mathcal{T}$ act on the tiles of the first class in the natural way, and an element $(T_1,T_1')\in \mathcal{T}\times \mathcal{T}$ acts on a tile $T(Q)\setminus T'(P)$ of the second class by $(T_1,T_1')(T(Q)\setminus T'(P))=T_1T(Q)\setminus T_1'T'(P)$. In particular,
$$\gamma \big(T(Q)\setminus T'(P), \big[ T(Q)\setminus T'(P)]\big)= \big\{(T_1,T'_1)\in\mathcal{T}\times \mathcal{T}:\, \|T_1T-T'_1T'\|_\mathcal{T}\leq 1/6\big\}.$$
Extend in the natural way this equivalence relation  to all $\mathcal{Y}$ (see Figure  \ref{qua}), similarly to (E$_2$). In particular, given $x=\{D_i\}_{i\in I}$ and $x'=\{D'_j\}_{j\in J}$ in $\mathcal{Y}$, then $x\sim x'$ if there exists $\alpha:I\to J$ such that, for each $i\in I$, $D'_{\alpha(i)}=g_i(D_i)$ for some $g_i\in G[D_i]$, with $\sup_{i\in I}\|g_i\|_{G[D_i]}<\infty$. Thus, $G[x]\subseteq \prod_{i\in I} G[D_i]$ is the subgroup of all such collections $g=\{g_i\}_{i\in I}$ provided with the bi-invariant distance $d_{G[x]}(g,g')=\sup_{i\in I}d_{G[D_i]}(g_i,g'_i)$. These choices satisfy (G$_1$)-(G$_6$)  for $\theta(s,t)=t$. Fix on $Y$ the distance $d$ defined by \eqref{distance}. Clearly $\mathcal{T}$ acts continuously on $(Y,d)$.
The completeness of $\gamma(x,[x])$  follows from the continuity of the action of $\mathcal{T}$ on $\mathbb{R}^n$ and from the completness of $G[x]$. (C$_2$) follows from the closed restriction   $\|T-T'\|_\mathcal{T}\leq 1/6$.   Condition (C$_3$)  follows from the  continuity of the group action of $\mathcal{T}$ on $\mathbb{R}^n$.
 Hence $(Y,d)$ is compact.

\begin{figure}[!htb]\label{qua}
\psset{xunit=1.0cm,yunit=1.0cm,algebraic=true,dotstyle=*,dotsize=3pt 0,linewidth=0.4pt,arrowsize=3pt 2,arrowinset=0.25}
\begin{pspicture*}(-.5,-1.2)(5,2.9)
\psline(0,0)(1,0)
\psline(0,0)(0,1)
\psline(1,0)(1,1)
\psline(1,1)(0,1)
\psline(0,1)(0,0)
\psline(0,0)(1,0)
\psline(0.33,0.67)(0.67,0.67)
\psline(0.67,0.67)(0.67,0.33)
\psline(0.67,0.33)(0.33,0.33)
\psline(0.33,0.33)(0.33,0.67)
\psline(2,0.5)(1,0.5)
\psline(1,-0.5)(2,-0.5)
\psline(1,-0.5)(1,0.5)
\psline(2,-0.5)(2,0.5)
\psline(1.43,0.25)(1.76,0.25)
\psline(1.43,-0.08)(1.43,0.25)
\psline(1.76,0.25)(1.76,-0.08)
\psline(1.76,-0.08)(1.43,-0.08)
\psline(1,0.83)(1.33,0.83)
\psline(1.33,0.83)(1.33,0.5)
\psline(1.33,0.5)(1,0.5)
\psline(1.67,0.83)(1.67,0.5)
\psline(1.33,0.83)(1.67,0.83)
\psline(2,1.83)(1,1.83)
\psline(1,0.83)(1,1.83)
\psline(1,0.83)(2,0.83)
\psline(2,0.83)(2,1.83)
\psline(1.22,1.41)(1.56,1.41)
\psline(1.56,1.41)(1.56,1.07)
\psline(1.56,1.07)(1.22,1.07)
\psline(1.22,1.07)(1.22,1.41)
\psline(0.67,-0.33)(0.67,0)
\psline(1,-0.33)(0.67,-0.33)
\psline(0.67,-1)(0.67,0)
\psline(-0.33,-1)(0.67,-1)
\psline(0.67,0)(-0.33,0)
\psline(-0.33,-1)(-0.33,0)
\psline(0.31,-0.6)(-0.02,-0.6)
\psline(0.31,-0.27)(0.31,-0.6)
\psline(-0.02,-0.6)(-0.02,-0.27)
\psline(-0.02,-0.27)(0.31,-0.27)
\psline(2.81,-0.02)(2.81,0.98)
\psline(3.81,0.98)(2.81,0.98)
\psline(2.81,-0.02)(3.81,-0.02)
\psline(3.81,-0.02)(3.81,0.98)
\psline(3.1,0.25)(3.1,0.58)
\psline(3.43,0.58)(3.43,0.25)
\psline(3.1,0.58)(3.43,0.58)
\psline(3.43,0.25)(3.1,0.25)
\psline(4.15,0.97)(4.15,0.64)
\psline(4.81,0.64)(3.81,0.64)
\psline(3.81,-0.36)(4.81,-0.36)
\psline(4.81,-0.36)(4.81,0.64)
\psline(3.81,-0.36)(3.81,0.64)
\psline(4.15,0.98)(4.15,0.65)
\psline(4.48,0.98)(4.48,0.65)
\psline(3.81,-0.35)(3.48,-0.35)
\psline(3.48,-0.35)(3.48,-0.02)
\psline(3.48,-1.02)(3.48,-0.02)
\psline(2.48,-1.02)(2.48,-0.02)
\psline(3.48,-0.02)(2.48,-0.02)
\psline(2.48,-1.02)(3.48,-1.02)
\psline(4.46,0.23)(4.46,-0.1)
\psline(4.13,0.23)(4.46,0.23)
\psline(4.13,-0.1)(4.13,0.23)
\psline(4.46,-0.1)(4.13,-0.1)
\psline(3.12,-0.42)(3.12,-0.75)
\psline(2.79,-0.42)(3.12,-0.42)
\psline(2.79,-0.75)(2.79,-0.42)
\psline(3.12,-0.75)(2.79,-0.75)
\psline(3.44,0.98)(3.44,1.98)
\psline(4.44,0.98)(4.44,1.98)
\psline(3.44,0.98)(4.44,0.98)
\psline(4.44,1.98)(3.44,1.98)
\psline(3.76,1.43)(3.76,1.76)
\psline(4.09,1.43)(3.76,1.43)
\psline(3.76,1.76)(4.09,1.76)
\psline(4.09,1.76)(4.09,1.43)
\psline(4.48,0.98)(4.15,0.98)
\end{pspicture*}
 \caption{Two equivalent elements of $\mathcal{Y}$ in Example \ref{QP}.}
 \end{figure}
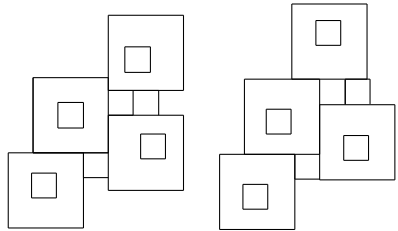

\end{eg}
\section{Almost periodicity and local isomorphism}\label{isomorphism}
Recall that, in a topological dynamical system, a point $x$ is \emph{almost periodic} if, for every neighborhood $U$ of $x$, the set  of ``return times" to $U$  is \emph{relatively dense}. For tiling spaces, almost periodicity implies  the \emph{local isomorphism} property, which is
 a property that a tiling of an Euclidean space might have which  expresses a certain ``regularity".
Almost periodicity is a necessary and sufficient condition to the \emph{minimality} of an orbit closure (Gottshalk's theorem \cite{Go}). An application of Zorn's lemma shows that every compact dynamical system admits a minimal invariant subset. It follows that a compact dynamical system always admits an almost periodic point.  In this section we discuss the relation between almost periodicity and local isomorphism property within our setting.

\begin{defn}
Let $(Y,d)$ be the metric space of all  tilings of $\mathbb{R}^n$ in $\mathcal{Y}$, where $d$ is defined by \eqref{distance}.  The tiling  $y\in Y$ is said to satisfy the \emph{local isomorphism} property if for every patch $y'$ of $y$ with bounded support there is some $r(y')>0$ such that, for every ball $B$ of $\mathbb{R}^n$ with radius $r(y')$, there exists $g\in\gamma(y',[y'])$ such that $\mathrm{supp}\big(g(y')\big)\subseteq B$ and $g(y')\subset y$.
\end{defn}

Observe that in the definitions of local isomorphism property given in \cite{GS,Rad}  only isometric copies of $y'$ are allowed. Let $\mathcal{T}$ be the group of the translations in $\mathbb{R}^n$. Assume that
 $\mathcal{T}\subseteq \gamma(y',[y'])$ for all $y'\in \mathcal{Y}$ and that $\mathcal{T}$ acts continuously on $(Y,d)$. Consider the topological dynamical system $(Y, \mathcal{T})$. Given $y\in Y$,
define the \emph{return set} of $y$ to an open subset $U\subset Y$ as
$$R(y,U)=\{\vec{t}\in \mathbb{R}^n:\, T_{-\vec{t}}\,(y)\in U\}.$$
 A subset $R\subset \mathbb{R}^n$ is \emph{relatively dense} if there is an $r>0$ such that every ball of radius $r$ in $\mathbb{R}^n$ intersects $R$. In this case we also say that $R$ is $r$-\emph{dense}. A tiling $y\in Y$ is \emph{almost periodic} if $R(y,U)$ is relatively dense for every open  $U\subset Y$ with $R(y,U)\neq \emptyset$.

 Now, assume that $y\in Y$ is almost periodic.
Take a patch $y'$ of $y$ and $\epsilon>0$. For  $\delta>0$ with $\mathrm{supp}(y')\subset B_{1/\delta}$,
consider the open set $U_{\delta}=\{x\in Y:\, d(x,y)<\delta\}$.
The set  $R(y,U_\delta)$ is $r_{\delta}$-dense for some $r_\delta>0$. Hence, given a ball $B$ of radius $r_\delta$ centered at a point $P\in \mathbb{R}^n$, there is $\vec t\in B$ such that $d(T_{-\vec t}\,(y),y)<\delta$, that is, there are
 $y''\in y[[B_{1/\delta}]]$,  $z''\in T_{-\vec t}\,(y)[[B_{1/\delta}]]$, and $g\in \gamma(y'',z'')$ with $\theta(\Delta(y'',\delta),\|g\|_{_{G[y'']}})\leq \delta$. By (G$_6$), $\theta(\Delta(y'',\delta)<\infty$ and we have $$\mathrm{supp}(g(y'))\subset B_{\Delta(y'',\delta)}.$$
Set  $r_{\delta}(y')=r_\delta+\Delta(y'',\delta).$
Since $y'\subset y''$,   $T_{\vec t}\,g(y')\subset y$ and its support is contained in the ball of radius $r_\delta(y')$ centered at $P$.
For $\delta$ sufficiently small, we must have $\|g\|_{G[y']}<\epsilon$. This shows that:
\begin{prop}\label{almost per}
  Let $(Y,d)$ be the metric space of all  tilings of $\mathbb{R}^n$ in $\mathcal{Y}$, where $d$ is defined by \eqref{distance}.  If the tiling  $y\in Y$ is almost periodic, then for every patch $y'$ of $y$ with bounded support and any $\epsilon>0$, there is some $r_{\epsilon}(y')>0$ such that, for every ball $B$ of $\mathbb{R}^n$ with radius $r_\epsilon(y')$, there exists $g\in\gamma(y',[y'])$ with $\|g\|_{G[y']}<\epsilon$ and $\vec t\in \mathbb{R}^n$ such that $\mathrm{supp}\big(T_{\vec t}g(y')\big)\subseteq B$ and $T_{\vec t}g(y')\subset y$.
\end{prop}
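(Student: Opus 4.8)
The plan is to feed shrinking neighborhoods of $y$ into the almost periodicity hypothesis and then unwind the definition \eqref{distance} of $d$ to extract, inside any prescribed ball, a small admissible copy of $y'$ lying in $y$. First I would fix the patch $y'$ and $\epsilon>0$, and pick a small parameter $\delta>0$ (to be constrained at the end) with $\mathrm{supp}(y')\subset B_{1/\delta}$. Consider the open neighborhood $U_\delta=\{x\in Y:\,d(x,y)<\delta\}$; since $d(y,y)=0$ we have $\vec 0\in R(y,U_\delta)$, so the return set is nonempty and almost periodicity makes it $r_\delta$-dense for some $r_\delta>0$. Given any ball $B=B(P,r)$ of the radius still to be fixed, relative density supplies $\vec t\in B(P,r_\delta)$ with $T_{-\vec t}(y)\in U_\delta$, that is $d(T_{-\vec t}(y),y)<\delta$. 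Unwinding \eqref{distance} then yields patches $y''\in y[[B_{1/\delta}]]$, $z''\in T_{-\vec t}(y)[[B_{1/\delta}]]$ and $g\in\gamma(y'',z'')$ with $\theta(\Delta(y'',\delta),\|g\|_{G[y'']})\leq\delta$ and $g(y'')=z''\subset T_{-\vec t}(y)$.

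Next I would push the perturbation down to $y'$. Since $\mathrm{supp}(y')\subset B_{1/\delta}\subseteq\mathrm{supp}(y'')$ and both are patches of the single tiling $y$, non-overlapping of tiles forces $y'\subseteq y''$, so by (G$_2$) the restriction $\iota_{[y'',y']}(g)$ — abbreviated $g$ — lies in $\gamma(y',[y'])$ and satisfies $g(y')\subseteq g(y'')=z''\subset T_{-\vec t}(y)$, while (G$_3$) gives $\|g\|_{G[y']}\leq\|g\|_{G[y'']}$. Translating back by $\vec t$ produces $T_{\vec t}g(y')\subseteq T_{\vec t}(z'')\subset y$, which is the containment in the conclusion. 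The size control is then routine: because $\mathrm{supp}(y'')\supseteq B_{1/\delta}$ we have $\Delta(y'',\delta)\geq 1/\delta$, so fixing any $s_0>\sqrt2$ and taking $\delta<1/s_0$, the monotonicity of $\theta^b$ in $(\Theta_1)$ yields $\theta(s_0,\|g\|_{G[y'']})\leq\theta(\Delta(y'',\delta),\|g\|_{G[y'']})\leq\delta$; as $\theta(s_0,\cdot)$ is strictly increasing and continuous with $\theta(s_0,0)=0$ by $(\Theta_1)$ and $(\Theta_3)$, this forces $\|g\|_{G[y']}\leq\|g\|_{G[y'']}\to 0$ as $\delta\to 0$, hence $\|g\|_{G[y']}<\epsilon$ for $\delta$ small.

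The delicate point, which I expect to be the main obstacle, is the \emph{uniform} support bound: the radius $r_\epsilon(y')$ must be pinned down \emph{before} the ball $B$ is presented, whereas the patch $y''$ (and therefore the quantity $\Delta(y'',\delta)$ appearing in the naive estimate) emerges only after $\vec t$, i.e. after $B$, has been chosen, and can in principle be arbitrarily large. To defeat this I would not measure $\mathrm{supp}(g(y'))$ through the floating $\Delta(y'',\delta)$ but through a fixed reference patch. Let $\bar y$ be the $B_{1/\delta}$-minimal patch of $y$ (an element of $\mathcal{Y}_{\min}(B_{1/\delta})$); since any bounded patch of $y$ covering $B_{1/\delta}$ contains every tile of $y$ meeting $B_{1/\delta}$, we get $y'\subseteq\bar y\subseteq y''$, and $\Delta(\bar y,\delta)<\infty$ by (G$_6$), depending only on $y$ and $\delta$. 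Put $\bar g=\iota_{[y'',\bar y]}(g)$, so that $g|_{y'}=\iota_{[\bar y,y']}(\bar g)$ by functoriality of the restriction maps and $\bar g(\bar y)\subseteq z''$. Applying Lemma \ref{i} with $x=y''$, $x'=\bar y$ and $s=s'=1/\delta$ (using $B_{1/\delta}\subseteq\mathrm{supp}(y'')\cap\mathrm{supp}(g(y''))$) gives $B_{1/\delta-\theta(1/\delta,\|g\|)}\subseteq\mathrm{supp}(\bar g(\bar y))$, and since $\theta(1/\delta,\|g\|)\leq\delta<1/\delta$ for small $\delta$ the copy $\bar g(\bar y)$ meets $B_{1/\delta}$; the definition of $\Delta(\bar y,\delta)$ then delivers the $\vec t$-independent bound $\mathrm{supp}(g(y'))\subseteq\mathrm{supp}(\bar g(\bar y))\subseteq B_{\Delta(\bar y,\delta)}$. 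I would finally set $r_\epsilon(y')=r_\delta+\Delta(\bar y,\delta)$, whence $\mathrm{supp}(T_{\vec t}g(y'))\subseteq\vec t+B_{\Delta(\bar y,\delta)}\subseteq B(P,r_\delta+\Delta(\bar y,\delta))=B$, which together with the containment and size estimates above verifies all three requirements. The reduction to the single reference patch $\bar y$ is precisely what turns the author's formula $r_\delta+\Delta(y'',\delta)$ into a genuinely uniform radius.
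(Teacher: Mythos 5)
Your proof follows the same route as the paper's: feed the shrinking neighborhoods $U_\delta$ into almost periodicity, unwind the metric \eqref{distance} to obtain $y''$, $z''$ and $g$, restrict $g$ to $y'$ via (G$_2$)--(G$_3$) to get $T_{\vec t}\,g(y')\subset y$ with $\|g\|_{G[y']}<\epsilon$ for $\delta$ small, and take the radius of the form $r_\delta+\Delta(\cdot,\delta)$. The one place you diverge is in fact a refinement: the paper sets $r_\delta(y')=r_\delta+\Delta(y'',\delta)$ with $y''$ depending on the ball $B$ (so the uniformity of the radius is left implicit), whereas your substitution of the $B_{1/\delta}$-minimal patch $\bar y$ --- legitimate because $y'\subseteq\bar y\subseteq y''$ gives $\Delta(\bar y,\delta)\leq\Delta(y'',\delta)$ and (G$_2$)/(G$_3$) together with Lemma \ref{i} let you bound $\mathrm{supp}(g(y'))$ through $\bar g(\bar y)$ --- makes $r_\epsilon(y')=r_\delta+\Delta(\bar y,\delta)$ genuinely independent of $B$, which is exactly what the statement requires.
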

From this we see that almost periodicity is slight stronger than local isomorphism property, since in the first case the copies are obtained by small perturbations up to translations of the initial patch. Of course, in some contexts they are indeed equivalent (for instance, for tiling spaces with finite local complexity under translations). In the lack of almost periodicity, we can still find similar (but  weaker)  regularity, as we will see in the next sections.

Recall that if $G$ is a group acting continuously on a compact topological space $X$, then $X$ is said to be \emph{uniquely ergodic} if admits one and only one $G$-invariant Borel probability measure.
The support of a uniquely ergodic measure is minimal invariant. Hence,  under the unique ergodicity condition, almost all tilings in $Y$ are almost periodic.


\section{Brown's lemma and its topological dynamics version}\label{bltdv}
The main idea of Ramsey theory  is that arbitrarily large
sets cannot avoid a certain degree of ``regularity". This
is exemplarily illustrated by Gallai's theorem, a
multidimensional version of the seminal van der Waerden
theorem. De la Llave and Windsor \cite{LW} exploited an
application of the Furstenberg's topological multiple
recurrence theorem (which is a topological dynamics version
of the multidimensional version of van der Waerden's
theorem) to tilings. Another Ramsey-type result is
the so called Brown's lemma \cite{B1,B2}, which asserts
that any finite coloring of the natural numbers admits a
monochromatic piecewise syndetic set. In  Section
\ref{btiling} we shall give an application of this lemma to
tiling theory. Before that, let us fix a suitable statement
of Brown's lemma  and  establish   its topological dynamics
version.

\vspace{.20in}

Recall the following notions of largeness of subsets of a topological semigroup $G$:
\begin{itemize}
  \item[(a)] a  subset $S$ of $G$ is \emph{syndetic} if there
exists a compact $K\subseteq G$ so that for any $g\in G$, there exists $k\in K$ with
$gk\in S$;
\item[(b)] a subset $T$ of a topological semigroup $G$ is \emph{thick} if for any
compact set $K\subseteq G$ there exists $g\in G$ with $gK\subseteq T$;
\item[(c)] a subset of $G$ is \emph{piecewise syndetic}
if it is the intersection of a syndetic set and a thick set.
\end{itemize}
When $G=\mathbb{N}$, this means that $S$ is piecewise syndetic if $S$ contains arbitrarily long intervals  with bounded gaps. If $G=\mathbb{R}^n$, a subset is syndetic if and only if is relatively dense.

Brown \cite{B1,B2}  proved that any finite coloring of the
natural numbers admits a monochromatic piecewise syndetic
set (Brown's lemma), that is, there exist $q$, depending
only on the coloring, and  arbitrarily large monochromatic
sets $A=\{a_1<\ldots<a_n\}$ with $\max \{a_{i+1}- a_i\}\leq
q$. Since $q$  is independent of the size of the
monochromatic sets $A$, this fact is not an immediate
consequence of van der Waerden's theorem (see \cite{B3} for
a detailed discussion on the (non)relation between these
results).
 More recently, Hindman
and Strauss (see Theorem 4.40 in \cite{HS}) proved that a subset of a discrete semigroup $G$ is piecewise syndetic if and only if
its closure intersects the smallest ideal $K(\beta G)\subseteq \beta G$ of the Stone-\v{C}ech compactification $\beta G$ of $G$, which is
 never empty.  Since $G$ is dense in $\beta G$, for any finite coloring $G_1\cup\ldots\cup G_l$ of $G$ there exists a color $G_i$ whose closure intersects $K(\beta G)$, hence $G_i$ is a piecewise syndetic set.
In the $\mathbb{Z}^n$ case, this implies that:

\begin{lem}[Multidimensional Brown's lemma]\label{Brown}
Given a finite coloring of the integer lattice
$\mathbb{Z}^n$, there exists  $q\in \mathbb{N}$ and a color $A\subset \mathbb{Z}^n$ satisfying:
for any  finite subset $F=\{P_i\}_{i\in I}$ of
$\mathbb{Z}^n$ and any $k\in \mathbb{N}$, there exist
$\vec{t}\in \mathbb{Z}^n$ and a collection
$\{\vec{v}_i\}_{i\in I}$, with $\vec{v}_i\in
Q_q^n=\{\vec{u}=(u_1,\ldots, u_n)\in \mathbb{Z}^n:\, -q\leq
u_j\leq q \}$, such that $\{k P_i+\vec{t}+ \vec v_i\}_{i\in
I}\subset A$.
 \end{lem}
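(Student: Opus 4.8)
The plan is to read off the conclusion directly from the piecewise syndeticity of a single color class, which the discussion preceding the statement already supplies. Since the coloring of $\mathbb{Z}^n$ is finite, the Hindman--Strauss criterion recalled above produces a color $A\subset\mathbb{Z}^n$ that is piecewise syndetic; writing the group operation additively, this means $A=S\cap T$ with $S$ syndetic and $T$ thick. Syndeticity of $S$ furnishes a finite set $K\subset\mathbb{Z}^n$ such that every $g\in\mathbb{Z}^n$ satisfies $g+\vec w\in S$ for some $\vec w\in K$. Because $K$ is finite it is bounded, so I may fix once and for all a $q\in\N$ large enough that $K\subseteq Q_q^n$. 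This $q$ is extracted from $S$ alone, hence depends only on the coloring and not on the pattern $F$ or the scale $k$ quantified afterwards, which is exactly the uniformity the lemma demands.

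Now fix a finite $F=\{P_i\}_{i\in I}$ and $k\in\N$. First I would fatten the dilated pattern: the set $kF+Q_q^n=\{kP_i+\vec u:\,i\in I,\ \vec u\in Q_q^n\}$ is finite, so thickness of $T$ yields a \emph{single} vector $\vec t\in\mathbb{Z}^n$ with $\vec t+(kF+Q_q^n)\subseteq T$; equivalently $kP_i+\vec t+\vec u\in T$ for every $i\in I$ and every $\vec u\in Q_q^n$. Next I would apply syndeticity pointwise: for each $i$, the defining property of $S$ at the point $g=kP_i+\vec t$ produces some $\vec v_i\in K\subseteq Q_q^n$ with $kP_i+\vec t+\vec v_i\in S$. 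Since $\vec v_i\in Q_q^n$, the very same element also lies in $T$ by the previous step, whence $kP_i+\vec t+\vec v_i\in S\cap T=A$ for all $i\in I$. This is precisely the assertion, with the common translation $\vec t$ and the bounded perturbations $\vec v_i\in Q_q^n$.

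The entire content lies in the order of the quantifiers, and the only step that needs care is the fattening. Thickness is strong enough to embed the whole dilated pattern into $T$ by one translation, however large $k$ makes $kF$; but the subsequent syndetic corrections displace each point by an a priori unknown amount, so one must pre-enlarge $kF$ by the full box $Q_q^n$ \emph{before} invoking thickness, ensuring that the corrected points $kP_i+\vec t+\vec v_i$ never leave $T$. Once this is arranged the argument is immediate, and the distinctive uniformity of $q$ in both $F$ and $k$ -- the feature separating Brown's lemma from van der Waerden's theorem -- is manifest, since $q$ is chosen from $S$ before $F$ and $k$ are ever introduced.
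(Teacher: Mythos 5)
Your argument is correct and coincides with the paper's own proof: both extract a piecewise syndetic color $A=S\cap T$ from the preceding Hindman--Strauss discussion, fix $q$ from the syndeticity constant of $S$ alone, use thickness of $T$ to place the fattened set $kF+\vec t+Q_q^n$ inside $T$, and then correct each point into $S$ by some $\vec v_i\in Q_q^n$. The only difference is that you spell out explicitly why the corrections stay inside $T$, which the paper leaves implicit.
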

 \begin{proof}
   Let $A\subseteq \mathbb{Z}^n$ be a piecewise syndetic monochromatic set.  The set $A$ is the intersection of a syndetic set $S$ with a thick set $T$.  There exists $q>0$ such that for any $\vec u\in \mathbb{Z}^n$ there exists $\vec v\in Q_q^n$ such that $\vec u+\vec v\in S$. On the other hand, given $k\in\mathbb{N}$, there exists $\vec t\in\mathbb{Z}^n$ such that $k F+\vec t+ Q_q^n\subset T$, once $K=k F+ Q_q^n$ is compact. For each $i$, choose $\vec v_i \in  Q_q^n$ such that $k P_i+\vec t+\vec v_i\in  S$.
 \end{proof}

Let us compare this lemma with the well known (see
\cite{Fu}) multidimensional version of the van der
Waerden's theorem (also known as Gallai's theorem), which
asserts that, given a finite coloring of $\mathbb{Z}^n$,
any finite subset $F$ of $\mathbb{Z}^n$ has a monochromatic
homothetic copy $k F+\vec{t}$. However, it says nothing,
apart its existence, about the scale factor $k$. On the
other hand,   Lemma \ref{Brown} states that we can take any
$k$ once we allow  ``bounded perturbations" ($q$ only
depends on the coloring) in the structure of the homothetic
copies of $F$.

The Furstenberg's topological multiple recurrence theorem \cite{Fu} is a topological dynamics version of Gallai's theorem. The following is a topological dynamics version of Lemma \ref{Brown}:
\begin{lem}[Topological dynamics  multidimensional Brown's lemma]\label{BrownTop}
  Let $(X,d)$ be a compact metric space and $T_1,\ldots, T_l$ commuting homeomorphism of $X$. Given $\epsilon>0$, there exists $q\in\mathbb{N}$ satisfying: for each $k\in\mathbb{N}$, there exist $x_k\in X$ and a collection $\{\vec{u}_i\}_{i\in \{1,\ldots, l\}}$, with $\vec{u}_i=(u^i_1,\ldots, u^i_l)\in Q^l_q$, such that
  $$d(x_k,T_i^kT_1^{u^i_1}\ldots T_l^{u^i_l}(x_k))< \epsilon$$
  for all $i\in\{1,\ldots,l\}$. Moreover, $d(x_k,x_{k'})<\epsilon$ for all $k,k'\in \mathbb{N}$.
\end{lem}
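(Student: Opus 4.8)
The plan is to deduce the topological dynamics statement (Lemma \ref{BrownTop}) from the combinatorial multidimensional Brown's lemma (Lemma \ref{Brown}) via a standard coloring argument on the compact metric space $(X,d)$, mirroring the way Furstenberg's topological multiple recurrence theorem is extracted from Gallai's theorem. The central idea is to translate a return-distance condition in $X$ into a finite coloring of a lattice of indices, apply Lemma \ref{Brown} to obtain a monochromatic piecewise syndetic structure, and then read the conclusion back into $X$.

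\smallskip

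First I would fix a point $x\in X$ and, using compactness, cover $X$ by finitely many open balls of radius $\epsilon/2$. This induces a finite coloring of $X$: color a point $z\in X$ by the index of (a choice of) ball containing it. I would then use this to color an appropriate lattice. The natural choice is to color $\mathbb{Z}^l$ (or a suitable index set) by assigning to each multi-index $\vec{m}=(m_1,\ldots,m_l)\in\mathbb{Z}^l$ the color of the point $T_1^{m_1}\cdots T_l^{m_l}(x)\in X$; since there are finitely many colors, Lemma \ref{Brown} applies with $n=l$. Taking $F$ to be the standard basis pattern $F=\{P_1,\ldots,P_l\}$ with $P_i=e_i$ the $i$-th standard basis vector, Lemma \ref{Brown} produces a number $q\in\mathbb{N}$ (depending only on the coloring, hence only on $\epsilon$ and the system) and, for each scale $k$, a translation vector $\vec{t}\in\mathbb{Z}^l$ together with perturbation vectors $\vec{v}_i\in Q_q^l$ such that all the points $k\,e_i+\vec{t}+\vec{v}_i$ lie in a single color class $A$.

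\smallskip

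Next I would interpret this monochromaticity dynamically. Because $k\,e_i+\vec{t}+\vec{v}_i$ and the ``base point'' $\vec{t}$ (or some fixed anchor in $A$) share the same color, the corresponding points $T^{k e_i+\vec t+\vec v_i}(x)$ and $T^{\vec t}(x)$ lie in a common ball of radius $\epsilon/2$, hence are within $\epsilon$ of each other. Setting $x_k:=T_1^{t_1}\cdots T_l^{t_l}(x)$ and unwinding $T^{k e_i+\vec v_i}=T_i^{k}T_1^{v_1^i}\cdots T_l^{v_l^i}$ (using that the $T_j$ commute), I would obtain, after applying the appropriate power of the product homeomorphism, the desired estimate $d(x_k,\,T_i^{k}T_1^{u_1^i}\cdots T_l^{u_l^i}(x_k))<\epsilon$ for each $i$, with $\vec{u}_i\in Q_q^l$. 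The final clause $d(x_k,x_{k'})<\epsilon$ for all $k,k'$ would follow by arranging that every $x_k$ lies in the same fixed $\epsilon/2$-ball: this is exactly the additional strength that \emph{piecewise syndeticity} of $A$ (not merely the existence of a homothetic copy) buys us, since the thick/syndetic decomposition lets one anchor all the configurations within a common monochromatic region, so one should choose the anchor point for $x_k$ to lie in that fixed color class for every $k$.

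\smallskip

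The main obstacle will be bookkeeping the two distinct roles that commutativity and the coloring play, and in particular ensuring that the single vector $\vec{t}_\lambda$ (here the analogue is the common anchoring guaranteeing $d(x_k,x_{k'})<\epsilon$) can be chosen uniformly. The delicate point is not the homothetic copy itself — that is immediate from Gallai's theorem — but rather extracting a uniform $q$ independent of $k$ together with the uniform proximity of all the $x_k$; this is precisely where Lemma \ref{Brown}, via the piecewise syndetic structure, is genuinely stronger than a naive van der Waerden argument, and care must be taken that the perturbations $\vec{u}_i$ genuinely land in the fixed cube $Q_q^l$ for the chosen $q$ and that the commuting homeomorphisms are applied in the correct order so that the products telescope correctly.
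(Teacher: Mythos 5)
Your overall strategy is the right one and is the same as the paper's: color $\mathbb{Z}^l$ by which member of a finite cover of small diameter the orbit point $T_1^{m_1}\cdots T_l^{m_l}(x)$ falls into, apply Lemma \ref{Brown}, and read the conclusion back into $X$. But there is a genuine gap at the anchoring step. You take $F=\{e_1,\ldots,e_l\}$ and set $x_k:=T_1^{t_1}\cdots T_l^{t_l}(x)$, then assert that $\vec t$ ``shares the same color'' as the points $k e_i+\vec t+\vec v_i$. Lemma \ref{Brown} gives you no such thing: it only places the \emph{perturbed pattern points} $kP_i+\vec t+\vec v_i$ in the color class $A$; the translation vector $\vec t$ itself need not lie in $A$, so the inequality $d(x_k,T_i^kT_1^{u^i_1}\cdots T_l^{u^i_l}(x_k))<\epsilon$ is unjustified. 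Your parenthetical fallback ``or some fixed anchor in $A$'' does not repair this: if you anchor at a fixed $\vec a\in A$, the required exponents become $\vec t+\vec v_i-\vec a$, which are unbounded in $k$ and hence do not lie in any fixed cube $Q_q^l$.

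The missing idea is to include the origin in the pattern: apply Lemma \ref{Brown} with $F=\{P_0=0,P_1=e_1,\ldots,P_l=e_l\}$ and cube parameter $q/2$. Then for each $k$ you also get a perturbation $\vec v_0\in Q^l_{q/2}$ with $\vec t+\vec v_0\in A$, and you set $x_k=T_1^{t_1+v^0_1}\cdots T_l^{t_l+v^0_l}(x)$, so that $x_k$ is itself one of the monochromatic points. The exponents become $\vec u_i=\vec v_i-\vec v_0\in Q^l_q$ (this is why one halves $q$ at the outset), and both $x_k$ and $T_i^kT_1^{u^i_1}\cdots T_l^{u^i_l}(x_k)$ lie in the same cell of diameter $<\epsilon$. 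This also delivers the ``moreover'' clause, which your argument cannot reach with $x_k=T^{\vec t}(x)$: since the color class $A$ is fixed by Lemma \ref{Brown} before $k$ is chosen and every $x_k$ corresponds to a point of $A$, all the $x_k$ lie in one fixed cell, whence $d(x_k,x_{k'})<\epsilon$. (The genuine gain from piecewise syndeticity is the uniformity of $q$ and of the color class over all $k$; the uniform proximity of the $x_k$ is then a consequence of anchoring inside that fixed class, not a separate consequence of the thick/syndetic decomposition.)
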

\begin{proof}This is analogous to the standard proof of Furstenberg's topological multiple recurrence theorem from Gallai's theorem.
  Let $U_1,\ldots, U_r$ be a covering of $X$ by pairwise disjoint sets of less than $\epsilon$ diameter. Choose $y\in X$ and consider the  coloring $\mathbb{Z}^l=\bigcup_{i=1}^r C_i$ defined as follows:
  $(a_1,\ldots,a_l)\in C_i$ if
$T_1^{a_1}\ldots T_l^{a_l}(y)\in U_i$. According to Lemma
\ref{Brown}, we can fix $q/2\in \mathbb{N}$ and a cell $C_i$ satisfying: for
each $k\in\mathbb{N}$, there exists
$\vec{t}=(t_1,\ldots,t_l)\in \mathbb{Z}^l$ and a collection
$\{\vec{v}_i\}_{i\in \{0,\ldots, l\}}$, with
$\vec{v}_i=(v^i_1,\ldots,v^i_l)\in Q^l_{q/2}$, such that
 $C_i$ contains the homothetic
``$q/2$-distorted" copy $\{k
P_i+\vec{t}+\vec{v}_i:\,\,P_i\in F\}$ of
  $$F=\big\{P_0=(0,\ldots,0), P_1=(1,0,\ldots,0), P_2=(0,1,0,\ldots,0),\ldots, P_l=(0,\ldots,0,1)\big\}.$$ That is,
  $$\big\{(t_1+v^0_1,\ldots,t_l+v^0_l),(k+t_1+v^1_1,t_2+v^1_2, \ldots,t_l+v^1_l), \ldots, (t_1+v^l_1,t_2+v^l_2, \ldots,k+t_l+v^l_l)   \big\} $$
  is a subset of $C_i$.
  Let $x_k=T_1^{t_1+v^0_1}\ldots T_l^{t_l+v^0_l}(y)$.  We then have, with $\vec{u}_i=\vec v_i-  \vec v_0\in Q^l_q$,
  $$\{x_k,T_1^{k}T_1^{u_1^1}\ldots T_l^{u_l^1}(x_k),T_2^{k}T_1^{u_1^2}\ldots T_l^{u_l^2}(x_k),\ldots, T_l^{k}T_1^{u_1^l}\ldots T_l^{u_l^l}(x_k)\}\subseteq U_i.$$ The result follows now from the fact that the diameter of $U_i$ is less than $\epsilon$.
\end{proof}

\section{An application of Brown's Lemma to tiling}\label{btiling}
In \cite{LW}, the authors proved, by using the well known Furstenberg's multiple recurrence theorem,  that, for  the three standard metrics, given a tiling $y$ of $\mathbb{R}^n$ and a finite geometric pattern $F\subset \mathbb{R}^n$ of points, one can find a patch $y'$ of $y$ so that copies of $y'$ appear in $y$ ``nearly" centered on some scaled and translated version of the pattern (Theorems 2, 3 and 4 of \cite{LW}).  Taking account the general setting we have developed in Section \ref{I}, next we present a unified and generalized reformulation of these results (Theorem \ref{LW}). Furthermore, we give an application of Lemma \ref{BrownTop} to tiling theory (Theorem \ref{BT}).

\vspace{.20in}

Let $(Y,d)$ be the metric space of all  tilings of $\mathbb{R}^n$ in $\mathcal{Y}$, where $d$ is defined by \eqref{distance}.  Let $\mathcal{T}$ be the group of translations in $\mathbb{R}^n$ and denote by $T_{\vec{v}}$ the translation by the vector $\vec{v}\in\mathbb{R}^n$. Suppose that $\mathcal{T}\subseteq \gamma(y',[y'])$ for all $y'\in\mathcal{Y}$ and that each $g\in\mathcal{T}$ induces a map $g:(Y,d)\to (Y,d)$ in the conditions of Proposition \ref{continuity}.  In particular, $\mathcal{T}$ acts continuously on $(Y,d)$.

\begin{thm}\label{LW}
   Assume that $(Y,d)$ is compact.  Given $y\in Y$, $\epsilon>0$ and a finite subset $F=\{\vec{v}_1,\ldots, \vec{v}_l\}\subset \mathbb{R}^n$, there exist $k\in\mathbb{N}$  and a patch $y'$ with bounded support satisfying:
   \begin{itemize}
     \item[i)] the support of $y'$ contains the ball $B_{1/\epsilon}$ and $T_{\vec u}(y')\subset y$ for some $\vec u\in\mathbb{R}^n$;
         \item[ii)] for  each $\vec{v}_i\in F$ there exists $g_i\in G[y']$, with $\|g_i\|_{G[y']}<\epsilon$, such that
  $T_{k\vec{v}_i+\vec u}g_i(y')\subset y.$
   \end{itemize}
   \end{thm}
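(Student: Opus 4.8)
The plan is to recast the statement as a multiple recurrence problem for the commuting translations and to invoke Furstenberg's topological multiple recurrence theorem \cite{Fu}, the topological dynamics form of Gallai's theorem. First I would restrict attention to the orbit closure $X=\overline{\{T_{\vec w}(y):\vec w\in\mathbb{R}^n\}}$, which is closed in the compact space $(Y,d)$, hence compact, and invariant under every translation. Since the $T_{\vec v_i}$ act continuously (they satisfy the hypotheses of Proposition \ref{continuity}) and translations commute, the maps $T_{-\vec v_1},\dots,T_{-\vec v_l}$ restrict to commuting homeomorphisms of $X$. Applying Furstenberg's theorem to these with a tolerance $\delta>0$ to be fixed later in terms of $\epsilon$, I obtain a recurrent point $z\in X$ and a common return time $k\in\mathbb{N}$ with $d\big(z,T_{-k\vec v_i}(z)\big)<\delta$ for every $i$; the inverses are used only so that the copies below come out translated by $+k\vec v_i$ rather than $-k\vec v_i$.

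Next I would read these inequalities through the definition \eqref{distance} of $d$. Each $d(z,T_{-k\vec v_i}(z))<\delta$ supplies a patch of $z$ on $B_{1/\delta}$ and an \emph{exact} admissible perturbation, acting at the origin and of size controlled by $\delta$, which after translating back yields $T_{k\vec v_i}g_i(p)\subseteq z$. Intersecting the finitely many patches and using Lemma \ref{i} to track how the guaranteed radius shrinks, I extract one patch $p$ of $z$ with $B_{1/\epsilon}\subseteq\mathrm{supp}(p)$ so that, for each $i$, $T_{k\vec v_i}g_i(p)\subseteq z$ with $\|g_i\|<\epsilon$ (smallness forced by taking $\delta$ small, via ($\Theta_1$), ($\Theta_3$) and (G$_3$)). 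The decisive point is that these relations already have exactly the shape demanded by the theorem --- perturbation first, translation afterwards --- and they hold \emph{exactly} inside $z$.

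It then remains to transport this configuration from the abstract point $z$ to the given tiling $y$. Because $z$ lies in the translation orbit closure of $y$, I can pick $\vec u$ with $T_{-\vec u}(y)$ so close to $z$ that the perturbation $h$ realizing the closeness on a single ball $B_\rho$ (chosen to contain $p$ together with all the copies $T_{k\vec v_i}g_i(p)$) is as small as I please. Setting $y'=h^{-1}(p)$ and using (G$_2$) together with Lemma \ref{i} gives $T_{\vec u}(y')\subseteq y$ with $B_{1/\epsilon}\subseteq\mathrm{supp}(y')$ --- this is item i), and it is \emph{exact} precisely because no perturbation is attached to the base copy. For item ii), restricting the same transfer to each copy gives $T_{\vec u}h^{-1}\big(T_{k\vec v_i}g_i(p)\big)\subseteq y$, which I rewrite, using that translations commute, as $T_{k\vec v_i+\vec u}\,\tilde g_i(y')\subseteq y$ with $\tilde g_i=\big(T_{-k\vec v_i}h^{-1}T_{k\vec v_i}\big)\,g_i\,h$.

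The step I expect to be the main obstacle is keeping $\|\tilde g_i\|<\epsilon$, because of the conjugate factor $T_{-k\vec v_i}h^{-1}T_{k\vec v_i}$: conjugating a perturbation by a translation of magnitude $\sim k\|\vec v_i\|$ can amplify its size (for direct isometries, for instance, a small rotational part of $h$ acquires a translational part proportional to $k$). The resolution lies in the order of the construction. Since $k$ is fixed by Furstenberg's theorem \emph{before} $\vec u$ is chosen, conjugation by $T_{k\vec v_i}$ is a single continuous automorphism fixing the identity (continuous because left multiplication by translations is continuous under the hypotheses of Proposition \ref{continuity}, and right multiplication is continuous by the right invariance (G$_1$)); hence taking $T_{-\vec u}(y)$ close enough to $z$ makes $h$, and with it $T_{-k\vec v_i}h^{-1}T_{k\vec v_i}$, arbitrarily small. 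Subadditivity of the size (Lemma \ref{G}(b)) then yields $\|\tilde g_i\|\le\|T_{-k\vec v_i}h^{-1}T_{k\vec v_i}\|+\|g_i\|+\|h\|<\epsilon$; admissibility of $\tilde g_i$ follows from ($\Gamma_2$) and ($\Gamma_3$), and the radius bookkeeping through Lemma \ref{i} guarantees that $\mathrm{supp}(y')$ still engulfs $B_{1/\epsilon}$.
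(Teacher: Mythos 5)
Your proposal is correct, and its skeleton --- pass to the translation orbit closure of $y$, apply Furstenberg's topological multiple recurrence theorem to the commuting maps $T_{-\vec v_i}$, and unpack the resulting inequalities through the definition \eqref{distance} of $d$ --- is exactly what the paper intends: the proof of Theorem \ref{LW} is omitted there, with the remark that it is an adaptation of \cite{LW} and of the proof of Theorem \ref{BT}, which has precisely this structure with Lemma \ref{BrownTop} in place of Furstenberg's theorem. The one place where you genuinely diverge is the transfer from the orbit-closure point $z$ back to $y$. In the proof of Theorem \ref{BT} the authors first choose $\vec v_k$ with $T_{\vec v_k}(y)$ close to the recurrent point and, by continuity of the homeomorphisms $T_i^kT_1^{u_1^i}\cdots T_l^{u_l^i}$ and of the metric, transfer the recurrence inequality itself to $T_{\vec v_k}(y)$ \emph{before} extracting any patches; the perturbations then already relate patches of honest translates of $y$, so no conjugation ever appears. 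You instead extract the configuration at $z$ and transport it afterwards, which is what forces you to control $\bigl(T_{-k\vec v_i}h^{-1}T_{k\vec v_i}\bigr)g_i h$. Your resolution of that wrinkle is sound --- $k$ is fixed before $\vec u$, conjugation by $T_{k\vec v_i}$ is continuous at the identity by the continuity of left multiplication assumed via Proposition \ref{continuity} together with (G$_1$), and Lemma \ref{G}(b) closes the estimate --- but the paper's ordering buys you that whole paragraph for free, at the mild cost of invoking continuity of the translation action on $(Y,d)$ up front. Either route is legitimate; if you keep yours, make sure to state explicitly that the agreement radius $\rho$ is chosen (using (G$_6$) and the fact that $k$, $F$ and $p$ are already fixed) large enough that the ball of agreement contains all the copies $T_{k\vec v_i}g_i(p)$, since that is what licenses restricting $h^{-1}$ to each of them.
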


Again, this theorem says nothing about the scale factor $k$. The following theorem shows that we can take any $k$ once we allow  ``bounded perturbations"  in the structure of the homothetic copies of the geometric pattern  $F$. Since there exists no direct relation between Brown's lemma and van der Waerden theorem, one can not expect to  obtain Theorem \ref{BT} directly from Theorem \ref{LW}.
The proof of Theorem \ref{LW}, which we omit here, is a straightforward adaptation  of the arguments used in \cite{LW} and in the proof of Theorem \ref{BT}.

\begin{thm}\label{BT}
   Assume that $(Y,d)$ is compact.  Given $y\in Y$, $\epsilon>0$ and  $F=\{\vec{v}_1,\ldots, \vec{v}_l\}\subset \mathbb{R}^n$, there exists $q\in\mathbb{N}$  and a patch $y'$ with bounded support  satisfying:
   \begin{itemize}
     \item[i)] the support of $y'$ contains  the ball $B_{1/\epsilon}$ and $T_{\vec u}(y')\subset y$ for some $\vec u\in \mathbb{R}^n$;
     \item[ii)] for each $\lambda>0$ and  $\vec{v}_i\in F$, there exists $g_{\lambda,i}\in G[y']$,  with
$\|g_{\lambda,i}\|_{G[y']}<\epsilon$ and
     $$T_{\vec{w}_{\lambda,i}}T_{\lambda\vec{v}_i+\vec{t}_\lambda}g_{\lambda,i}(y')\subset y,$$ for some $\vec{w}_{\lambda,i}\in Q_q(F)=\{\vec w\in\mathbb{R}^n:\, \vec w=\sum_{i=1}^l\alpha_i\vec{v}_i,\,
     |\alpha_i|\leq q\}$ and vector $\vec{t}_\lambda\in \mathbb{R}^n$.
   \end{itemize}
\end{thm}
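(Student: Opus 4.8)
The plan is to mirror the strategy used for Theorem \ref{LW}, but replacing Furstenberg's topological multiple recurrence theorem with its Brownian counterpart, Lemma \ref{BrownTop}. The starting point is to encode the finite pattern $F=\{\vec v_1,\ldots,\vec v_l\}$ as a commuting family of translations acting on the compact metric space $(Y,d)$. Specifically I would set $T_i:=T_{\vec v_i}$ for $i\in\{1,\ldots,l\}$; by the standing hypotheses each $T_i$ is a homeomorphism of $(Y,d)$, and since translations commute so do the $T_i$. Thus $(Y,d)$ together with $T_1,\ldots,T_l$ is exactly the data required by Lemma \ref{BrownTop}.

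Given $\epsilon>0$, the idea is to feed a suitably small threshold into Lemma \ref{BrownTop} so that the resulting recurrence, measured in the metric $d$, translates into the desired patch statement. First I would fix $\delta>0$ small enough that $1/\delta$ exceeds $1/\epsilon$ and small enough to force the perturbations produced below to have size less than $\epsilon$ (using $(\Theta_3)$ and (G$_3$), exactly as in the proof of Proposition \ref{almost per}). Applying Lemma \ref{BrownTop} with this $\delta$ yields an integer $q\in\mathbb N$ with the property that, for every $k\in\mathbb N$, there is a point $x_k\in Y$ and integer vectors $\vec u_i=(u^i_1,\ldots,u^i_l)\in Q^l_q$ with
\begin{equation}\label{browndist}
d\bigl(x_k,\,T_i^{k}T_1^{u^i_1}\cdots T_l^{u^i_l}(x_k)\bigr)<\delta
\end{equation}
for all $i$. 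Unwinding the definition of the $T_j$, the homeomorphism $T_i^{k}T_1^{u^i_1}\cdots T_l^{u^i_l}$ is the translation by $k\vec v_i+\sum_{j=1}^l u^i_j\vec v_j$, so \eqref{browndist} says that $x_k$ nearly returns to itself under a translation whose vector is $k\vec v_i$ plus a correction lying in $Q_q(F)$.

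Next I would convert the metric recurrence \eqref{browndist} into the patch-containment conclusion. Unravelling the definition \eqref{distance} of $d$, the inequality $d(x_k,x_k')<\delta$ for $x_k'=T_i^{k}T_1^{u^i_1}\cdots T_l^{u^i_l}(x_k)$ produces patches $x'\in x_k[[B_{1/\delta}]]$ and $x''\in x_k'[[B_{1/\delta}]]$ together with a perturbation $g\in\gamma(x',x'')$ of controlled size; pulling everything back along the translation (which moves $\lambda\vec v_i+\vec t_\lambda$ into position) gives a patch $y'$ of $y$ whose translated and perturbed copy sits inside $y$. To obtain a single patch $y'$ working simultaneously for all $\lambda>0$ I would intersect over $i$, take $y'$ to be a common $B_{1/\epsilon}$-minimal patch, and use axioms (G$_2$), (G$_3$) together with Lemma \ref{i} to guarantee the perturbations $g_{\lambda,i}$ restrict compatibly and retain size below $\epsilon$; the vector $\vec t_\lambda$ arises from the translation part that is independent of $i$, and $\vec w_{\lambda,i}=\sum_{j}u^i_j\vec v_j\in Q_q(F)$ supplies the bounded distortion. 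I expect the main obstacle to be passing from the fixed integer scale $k$ in Lemma \ref{BrownTop} to an arbitrary real scale factor $\lambda>0$: this requires replacing the discrete dilation by a continuous one, presumably by applying the discrete recurrence along the scaled pattern $\{\lfloor\lambda\rfloor\vec v_i\}$ or by re-running the coloring argument with the lattice adapted to $\lambda$, and then absorbing the fractional discrepancy $(\lambda-k)\vec v_i$ into the bounded perturbation term $\vec w_{\lambda,i}$, which is exactly what the freedom $|\alpha_i|\le q$ in $Q_q(F)$ is designed to accommodate.
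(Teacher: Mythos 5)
Your overall route is the same as the paper's: apply Lemma \ref{BrownTop} to the commuting translations determined by $F$, unwind the definition \eqref{distance} to turn the metric recurrence into patch containment, and absorb the fractional discrepancy $\lceil\lambda\rceil-\lambda$ into $Q_q(F)$ at the cost of enlarging $q'$ to $q=q'+1$. The size control via $(\Theta_1)$, $(\Theta_3)$, (G$_3$) and Lemma \ref{i} is also as in the paper. (Two small remarks: your second alternative for handling real $\lambda$ --- re-running the coloring argument with a lattice adapted to $\lambda$ --- is self-defeating, since the $q$ produced by Brown's lemma depends on the coloring and would then depend on $\lambda$; and with $k=\lfloor\lambda\rfloor$ you get $k=0\notin\mathbb{N}$ for $\lambda<1$, which is why the paper takes $k=\lceil\lambda\rceil$.)

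There is, however, one genuine gap: the uniformity of the patch $y'$ in $\lambda$. You propose to obtain a single $y'$ valid for all $\lambda>0$ by ``intersecting over $i$'' and taking a common $B_{1/\epsilon}$-minimal patch, but intersecting over the pattern points $i$ only reconciles the $l$ recurrences for one fixed $k$. The real difficulty is that the base points $x_k$ supplied by Lemma \ref{BrownTop} vary with $k=\lceil\lambda\rceil$, so a priori each $\lambda$ produces its own patch sitting inside a different translate of $y$, and condition i) --- one $y'$ with $T_{\vec u}(y')\subset y$ for a single $\vec u$, serving every $\lambda$ --- does not follow. This is exactly what the clause ``$d(x_k,x_{k'})<\epsilon$ for all $k,k'$'' in Lemma \ref{BrownTop} is for, and your proposal never invokes it. The paper uses it to get $d(T_{\vec v_1}(y),T_{\vec v_k}(y))<\epsilon'$ for every $k$, which yields perturbations $f_k$ identifying large patches of $T_{\vec v_k}(y)$ with patches of the single tiling $T_{\vec v_1}(y)$, and then defines $y'=\bigcap_k f_k^{-1}(w'_k\cap z''_k)\subset T_{\vec v_1}(y)$ --- an intersection over $k$, not over $i$ --- checking via Lemma \ref{i} that this infinite intersection still contains a ball of radius $1/\epsilon'-\epsilon'$. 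Without this step your argument proves only the weaker statement in which $y'$ (and the translate $\vec u$) is allowed to depend on $\lambda$.
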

\begin{proof}
  Consider $y\in Y$ and  $Y_0=\mathrm{closure}(\mathcal{T}(y))\subseteq Y$. Clearly, $(Y_0,d)$ is compact and invariant under the action of $\mathcal{T}$. Let $F=\{\vec{v}_1,\ldots, \vec{v}_l\}$ and consider the $l$ commuting homeomorphisms of $Y_0$ given by $T_i=T_{-\vec{v}_i}$.

   By Lemma \ref{BrownTop}, for each $\epsilon'>0$ there exists $q'\in\mathbb{N}$ satisfying: for each $\lambda>0$  there exist $x_k\in Y_0$ and a collection $\{\vec{u}_i\}_{i\in \{1,\ldots, l\}}$, with $\vec{u}_i\in Q^l_{q'}$ and $\vec{u}_i=(u^i_1,\ldots, u^i_l)$, such that
  $$d(x_k,T_i^kT_1^{u^i_1}\ldots T_l^{u^i_l}(x_k))< \epsilon'/3$$
  for all $i\in\{1,\ldots,l\}$, where $k=\lceil\lambda\rceil$ is the smallest integer greater or equal than $\lambda$. Since $x_k\in Y_0$ is either a translation of $y$ or the limit of translations of $y$, by continuity we can find $\vec{v}_k\in\mathbb{R}^n$ such that $d(x_k,T_{\vec v_k}(y))<\epsilon'/3$ and
  $$|d(T_{\vec{v}_k}(y),T_i^kT_1^{u^i_1}\ldots T_l^{u^i_l}T_{\vec{v}_k}(y))-d(x_k,T_i^kT_1^{u^i_1}\ldots T_l^{u^i_l}(x_k))|<\epsilon'/3,$$
  hence
$$d(T_{\vec{v}_k}(y),T_i^kT_1^{u^i_1}\ldots T_l^{u^i_l}T_{\vec{v}_k}(y))< \epsilon',$$
  for all $i\in\{1,\ldots,l\}$.
  By the definition of the metric $d$, there exist $$z_{i,k}'\in T_i^kT_1^{u^i_1}\ldots T_l^{u^i_l}T_{\vec{v}_k}(y)[[B_{1/\epsilon'}]],\quad z_{i,k}''\in T_{\vec{v}_k}(y)[[B_{1/\epsilon'}]]\quad\textrm{and}\,\, h_{i,k}\in G[z_{i,k}'],$$ with $\theta(\Delta(z_{i,k}',\epsilon'),\|h_{i,k}\|_{G[z_{i,k}']})\leq \epsilon'$, such that $h_{i,k}(z_{i,k}')=z_{i,k}''$. Now consider $z_k''\subset T_{\vec v_k}(y)$ to be the connected component of $\bigcap _{i=1}^lz_{i,k}''$ whose support  contains the ball $B_{1/\epsilon'}$.
   By construction,   $$ T_l^{-u^i_l}\ldots T_1^{-u^i_1}T_i^{-k}T_{-\vec v_k}h_{i,k}^{-1}(z''_k)\subset y.$$

On the other hand, since, by Lemma \ref{BrownTop}, $d(x_1,x_k)< \epsilon'/3$, we have $$d(T_{\vec v_1}(y),T_{\vec v_k}(y))\leq d(T_{\vec v_1}(y),x_1)+d(x_1,x_k)+d(x_k,T_{\vec v_k}(y))<\epsilon'.$$ Hence, there exist $w'_{1,k}\in T_{\vec v_1}(y)[[B_{1/\epsilon'}]]$, $w'_k\in  T_{\vec v_k}(y) [[B_{1/\epsilon'}]]$ and $f_k\in G[w'_{1,k}]$  with $w'_k=f_k(w'_{1,k})$ and $\theta(\Delta(w'_{1,k},\epsilon'),\|f_{k}\|_{G[w'_{1,k}]}) )\leq\epsilon'$. Set
$$y'=\bigcap_kf^{-1}_{k}(w'_k\cap z''_k)\subset T_{\vec{v}_1}(y).$$
By Lemma \ref{i}, the support of $y'$ contains $B_{1/\epsilon'-\epsilon'}$. Moreover,
 $$ T_l^{-u^i_l}\ldots T_1^{-u^i_1}T_i^{-k}T_{-\vec v_k}h_{i,k}^{-1}f_k(y')\subset y$$
and 
\begin{align*}
\theta(\Delta(y',\epsilon'/(1-\epsilon'^2)),\|h^{-1}_{i,k}f_{k}\|_{G[y']}) )&\leq \theta(\Delta(w'_{1,k},\epsilon'),\|f_{k}\|_{G[w'_{1,k}]}))+\theta(\Delta(z_{i,k}',\epsilon'),\|h_{i,k}\|_{G[z_{i,k}']})\\& \leq 2\epsilon'.\end{align*}
Write $g_{\lambda,i}=h_{i,k}^{-1}f_k$.
 By the continuity of the left and right multiplication by translations, and by the properties of $\theta\in \Theta$,  we can  choose $\epsilon'<\epsilon$  such that $B_{1/\epsilon}\subseteq \mathrm{supp}(y')$ and  $\|g_{\lambda,i}\|_{G[y_\lambda]}<\epsilon.$ The result holds with $\vec{t}_\lambda=-\vec{v_k}$, $\vec{u}=-\vec{v}_1$, $q=q'+1$ and $\vec{w}_{\lambda,i}=\sum_{j=1}^l\alpha_j\vec{v}_j\in Q_q(F)$, with $\alpha_j=u^i_j\in\mathbb{Z}$ if $j\neq i$ and $\alpha_i=(\lceil\lambda\rceil-\lambda)+u_i^i$.
\end{proof}

\begin{rem}
  For almost periodic tilings, this property  holds automatically.
However, it survives even  in the lack of almost periodicity.
\end{rem}

Let us give an informal pictorial illustration of Theorem \ref{BT}. Suppose that we have a tilling $y$ and a finite subset $F$ of $\mathbb{R}^n$, for example the set represented in Figure 2.
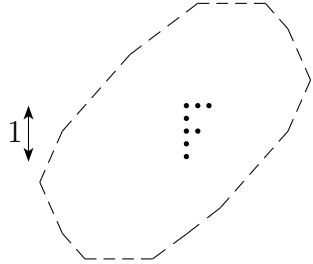
\begin{figure}[!htb]\label{Fsubis}
\psset{xunit=0.15cm,yunit=0.17cm,algebraic=true,dotstyle=o,dotsize=2pt 0,linewidth=0.4pt,arrowsize=3pt 2,arrowinset=0.25}
\begin{pspicture*}(-14.6,-10.13)(12.19,10.37)
\psline{<->}(-13,-2.4)(-13,2)
\psline[linewidth=.4pt,linestyle=dashed](2,10)(8,10)
\psline[linewidth=.4pt,linestyle=dashed](8,10)(10,8)
\psline[linewidth=.4pt,linestyle=dashed](10,8)(12,4)
\psline[linewidth=.4pt,linestyle=dashed](12,4)(10,0)
\psline[linewidth=.4pt,linestyle=dashed](10,0)(4,-6)
\psline[linewidth=.4pt,linestyle=dashed](4,-6)(0.97,-8.08)
\psline[linewidth=.4pt,linestyle=dashed](0.97,-8.08)(-2,-10)
\psline[linewidth=.4pt,linestyle=dashed](-2,-10)(-8,-10)
\psline[linewidth=.4pt,linestyle=dashed](-8,-10)(-10,-8)
\psline[linewidth=.4pt,linestyle=dashed](-10,-8)(-12,-4)
\psline[linewidth=.4pt,linestyle=dashed](-12,-4)(-10,0)
\psline[linewidth=.4pt,linestyle=dashed](-10,0)(-4,6)
\psline[linewidth=.4pt,linestyle=dashed](-4,6)(2,10)
\rput(-14.2,0){$1$}
\begin{scriptsize}
\psdots[dotstyle=*](1,0)
\psdots[dotstyle=*](1,1)
\psdots[dotstyle=*](1,2)
\psdots[dotstyle=*](1,-1)
\psdots[dotstyle=*](1,-2)
\psdots[dotstyle=*](3,2)
\psdots[dotstyle=*](2,0)
\psdots[dotstyle=*](2,2)
\end{scriptsize}
\end{pspicture*}
 \caption{The finite set $F$ and the convex hull of $Q_q(F)$.}
 \end{figure}

Given $\epsilon >0$, there exist $q\in\mathbb{N}$ and a patch $y'$, whose support contains the ball of radius $1/\epsilon$ about the origin, such that, for each scale factor $\lambda$, there exits a vector $\vec{t}_\lambda$ so that copies of  $y'$ appear in the tilling ``nearly"  centered (in the sense that, for each $\vec{v}_i\in F$, $\|g_{\lambda,i}\|_{G[y_\lambda]}<\epsilon$) on $\lambda F+\vec{t}_\lambda$ up to ``bounded perturbations" (in the sense that $q$, and consequently $Q_q(F)$, does not depend on $\lambda$ and $\vec{w}_{\lambda,i}\in Q_q(F)$), as represented in Figure 3.

\begin{figure}[!htb]\label{Fsubis}
\psset{xunit=0.02cm,yunit=0.02cm,algebraic=true,dotstyle=o,dotsize=2pt 0,linewidth=0.1pt,arrowsize=3pt 2,arrowinset=0.25}
\begin{pspicture*}(1186,405.76)(1337.97,632.77)
\rput(1192,517){$\lambda$}
\psline[linewidth=0.4pt]{<->}(1202,420)(1202,620)
\psline[linewidth=0.4pt,linestyle=dashed,dash=1pt 1pt](1277.44,628)(1283.44,628)
\psline[linewidth=0.4pt,linestyle=dashed,dash=1pt 1pt](1283.44,628)(1285.44,626)
\psline[linewidth=0.4pt,linestyle=dashed,dash=1pt 1pt](1285.44,626)(1287.44,622)
\psline[linewidth=0.4pt,linestyle=dashed,dash=1pt 1pt](1287.44,622)(1285.44,618)
\psline[linewidth=0.4pt,linestyle=dashed,dash=1pt 1pt](1285.44,618)(1279.44,612)
\psline[linewidth=0.4pt,linestyle=dashed,dash=1pt 1pt](1279.44,612)(1276.41,609.93)
\psline[linewidth=0.4pt,linestyle=dashed,dash=1pt 1pt](1276.41,609.93)(1273.44,608)
\psline[linewidth=0.4pt,linestyle=dashed,dash=1pt 1pt](1273.44,608)(1267.44,608)
\psline[linewidth=0.4pt,linestyle=dashed,dash=1pt 1pt](1267.44,608)(1265.44,610)
\psline[linewidth=0.4pt,linestyle=dashed,dash=1pt 1pt](1265.44,610)(1263.44,614)
\psline[linewidth=0.4pt,linestyle=dashed,dash=1pt 1pt](1263.44,614)(1265.44,618)
\psline[linewidth=0.4pt,linestyle=dashed,dash=1pt 1pt](1265.44,618)(1271.44,624)
\psline[linewidth=0.4pt,linestyle=dashed,dash=1pt 1pt](1271.44,624)(1277.44,628)
\psline[linewidth=0.4pt,linestyle=dashed,dash=1pt 1pt](1228.44,530)(1234.44,530)
\psline[linewidth=0.4pt,linestyle=dashed,dash=1pt 1pt](1234.44,530)(1236.44,528)
\psline[linewidth=0.4pt,linestyle=dashed,dash=1pt 1pt](1236.44,528)(1238.44,524)
\psline[linewidth=0.4pt,linestyle=dashed,dash=1pt 1pt](1238.44,524)(1236.44,520)
\psline[linewidth=0.4pt,linestyle=dashed,dash=1pt 1pt](1236.44,520)(1230.44,514)
\psline[linewidth=0.4pt,linestyle=dashed,dash=1pt 1pt](1230.44,514)(1227.41,511.93)
\psline[linewidth=0.4pt,linestyle=dashed,dash=1pt 1pt](1227.41,511.93)(1224.44,510)
\psline[linewidth=0.4pt,linestyle=dashed,dash=1pt 1pt](1224.44,510)(1218.44,510)
\psline[linewidth=0.4pt,linestyle=dashed,dash=1pt 1pt](1218.44,510)(1216.44,512)
\psline[linewidth=0.4pt,linestyle=dashed,dash=1pt 1pt](1216.44,512)(1214.44,516)
\psline[linewidth=0.4pt,linestyle=dashed,dash=1pt 1pt](1214.44,516)(1216.44,520)
\psline[linewidth=0.4pt,linestyle=dashed,dash=1pt 1pt](1216.44,520)(1222.44,526)
\psline[linewidth=0.4pt,linestyle=dashed,dash=1pt 1pt](1222.44,526)(1228.44,530)
\psline[linewidth=0.4pt,linestyle=dashed,dash=1pt 1pt](1228.44,579)(1234.44,579)
\psline[linewidth=0.4pt,linestyle=dashed,dash=1pt 1pt](1234.44,579)(1236.44,577)
\psline[linewidth=0.4pt,linestyle=dashed,dash=1pt 1pt](1236.44,577)(1238.44,573)
\psline[linewidth=0.4pt,linestyle=dashed,dash=1pt 1pt](1238.44,573)(1236.44,569)
\psline[linewidth=0.4pt,linestyle=dashed,dash=1pt 1pt](1236.44,569)(1230.44,563)
\psline[linewidth=0.4pt,linestyle=dashed,dash=1pt 1pt](1230.44,563)(1227.41,560.93)
\psline[linewidth=0.4pt,linestyle=dashed,dash=1pt 1pt](1227.41,560.93)(1224.44,559)
\psline[linewidth=0.4pt,linestyle=dashed,dash=1pt 1pt](1224.44,559)(1218.44,559)
\psline[linewidth=0.4pt,linestyle=dashed,dash=1pt 1pt](1218.44,559)(1216.44,561)
\psline[linewidth=0.4pt,linestyle=dashed,dash=1pt 1pt](1216.44,561)(1214.44,565)
\psline[linewidth=0.4pt,linestyle=dashed,dash=1pt 1pt](1214.44,565)(1216.44,569)
\psline[linewidth=0.4pt,linestyle=dashed,dash=1pt 1pt](1216.44,569)(1222.44,575)
\psline[linewidth=0.4pt,linestyle=dashed,dash=1pt 1pt](1222.44,575)(1228.44,579)
\psline[linewidth=0.4pt,linestyle=dashed,dash=1pt 1pt](1228.44,628)(1234.44,628)
\psline[linewidth=0.4pt,linestyle=dashed,dash=1pt 1pt](1234.44,628)(1236.44,626)
\psline[linewidth=0.4pt,linestyle=dashed,dash=1pt 1pt](1236.44,626)(1238.44,622)
\psline[linewidth=0.4pt,linestyle=dashed,dash=1pt 1pt](1238.44,622)(1236.44,618)
\psline[linewidth=0.4pt,linestyle=dashed,dash=1pt 1pt](1236.44,618)(1230.44,612)
\psline[linewidth=0.4pt,linestyle=dashed,dash=1pt 1pt](1230.44,612)(1227.41,609.93)
\psline[linewidth=0.4pt,linestyle=dashed,dash=1pt 1pt](1227.41,609.93)(1224.44,608)
\psline[linewidth=0.4pt,linestyle=dashed,dash=1pt 1pt](1224.44,608)(1218.44,608)
\psline[linewidth=0.4pt,linestyle=dashed,dash=1pt 1pt](1218.44,608)(1216.44,610)
\psline[linewidth=0.4pt,linestyle=dashed,dash=1pt 1pt](1216.44,610)(1214.44,614)
\psline[linewidth=0.4pt,linestyle=dashed,dash=1pt 1pt](1214.44,614)(1216.44,618)
\psline[linewidth=0.4pt,linestyle=dashed,dash=1pt 1pt](1216.44,618)(1222.44,624)
\psline[linewidth=0.4pt,linestyle=dashed,dash=1pt 1pt](1222.44,624)(1228.44,628)
\psline[linewidth=0.4pt,linestyle=dashed,dash=1pt 1pt](1228.44,481)(1234.44,481)
\psline[linewidth=0.4pt,linestyle=dashed,dash=1pt 1pt](1234.44,481)(1236.44,479)
\psline[linewidth=0.4pt,linestyle=dashed,dash=1pt 1pt](1236.44,479)(1238.44,475)
\psline[linewidth=0.4pt,linestyle=dashed,dash=1pt 1pt](1238.44,475)(1236.44,471)
\psline[linewidth=0.4pt,linestyle=dashed,dash=1pt 1pt](1236.44,471)(1230.44,465)
\psline[linewidth=0.4pt,linestyle=dashed,dash=1pt 1pt](1230.44,465)(1227.41,462.93)
\psline[linewidth=0.4pt,linestyle=dashed,dash=1pt 1pt](1227.41,462.93)(1224.44,461)
\psline[linewidth=0.4pt,linestyle=dashed,dash=1pt 1pt](1224.44,461)(1218.44,461)
\psline[linewidth=0.4pt,linestyle=dashed,dash=1pt 1pt](1218.44,461)(1216.44,463)
\psline[linewidth=0.4pt,linestyle=dashed,dash=1pt 1pt](1216.44,463)(1214.44,467)
\psline[linewidth=0.4pt,linestyle=dashed,dash=1pt 1pt](1214.44,467)(1216.44,471)
\psline[linewidth=0.4pt,linestyle=dashed,dash=1pt 1pt](1216.44,471)(1222.44,477)
\psline[linewidth=0.4pt,linestyle=dashed,dash=1pt 1pt](1222.44,477)(1228.44,481)
\psline[linewidth=0.4pt,linestyle=dashed,dash=1pt 1pt](1228.44,432)(1234.44,432)
\psline[linewidth=0.4pt,linestyle=dashed,dash=1pt 1pt](1234.44,432)(1236.44,430)
\psline[linewidth=0.4pt,linestyle=dashed,dash=1pt 1pt](1236.44,430)(1238.44,426)
\psline[linewidth=0.4pt,linestyle=dashed,dash=1pt 1pt](1238.44,426)(1236.44,422)
\psline[linewidth=0.4pt,linestyle=dashed,dash=1pt 1pt](1236.44,422)(1230.44,416)
\psline[linewidth=0.4pt,linestyle=dashed,dash=1pt 1pt](1230.44,416)(1227.41,413.93)
\psline[linewidth=0.4pt,linestyle=dashed,dash=1pt 1pt](1227.41,413.93)(1224.44,412)
\psline[linewidth=0.4pt,linestyle=dashed,dash=1pt 1pt](1224.44,412)(1218.44,412)
\psline[linewidth=0.4pt,linestyle=dashed,dash=1pt 1pt](1218.44,412)(1216.44,414)
\psline[linewidth=0.4pt,linestyle=dashed,dash=1pt 1pt](1216.44,414)(1214.44,418)
\psline[linewidth=0.4pt,linestyle=dashed,dash=1pt 1pt](1214.44,418)(1216.44,422)
\psline[linewidth=0.4pt,linestyle=dashed,dash=1pt 1pt](1216.44,422)(1222.44,428)
\psline[linewidth=0.4pt,linestyle=dashed,dash=1pt 1pt](1222.44,428)(1228.44,432)
\psline[linewidth=0.4pt,linestyle=dashed,dash=1pt 1pt](1277.44,530)(1283.44,530)
\psline[linewidth=0.4pt,linestyle=dashed,dash=1pt 1pt](1283.44,530)(1285.44,528)
\psline[linewidth=0.4pt,linestyle=dashed,dash=1pt 1pt](1285.44,528)(1287.44,524)
\psline[linewidth=0.4pt,linestyle=dashed,dash=1pt 1pt](1287.44,524)(1285.44,520)
\psline[linewidth=0.4pt,linestyle=dashed,dash=1pt 1pt](1285.44,520)(1279.44,514)
\psline[linewidth=0.4pt,linestyle=dashed,dash=1pt 1pt](1279.44,514)(1276.41,511.93)
\psline[linewidth=0.4pt,linestyle=dashed,dash=1pt 1pt](1276.41,511.93)(1273.44,510)
\psline[linewidth=0.4pt,linestyle=dashed,dash=1pt 1pt](1273.44,510)(1267.44,510)
\psline[linewidth=0.4pt,linestyle=dashed,dash=1pt 1pt](1267.44,510)(1265.44,512)
\psline[linewidth=0.4pt,linestyle=dashed,dash=1pt 1pt](1265.44,512)(1263.44,516)
\psline[linewidth=0.4pt,linestyle=dashed,dash=1pt 1pt](1263.44,516)(1265.44,520)
\psline[linewidth=0.4pt,linestyle=dashed,dash=1pt 1pt](1265.44,520)(1271.44,526)
\psline[linewidth=0.4pt,linestyle=dashed,dash=1pt 1pt](1271.44,526)(1277.44,530)
\psline[linewidth=0.4pt,linestyle=dashed,dash=1pt 1pt](1326.44,628)(1332.44,628)
\psline[linewidth=0.4pt,linestyle=dashed,dash=1pt 1pt](1332.44,628)(1334.44,626)
\psline[linewidth=0.4pt,linestyle=dashed,dash=1pt 1pt](1334.44,626)(1336.44,622)
\psline[linewidth=0.4pt,linestyle=dashed,dash=1pt 1pt](1336.44,622)(1334.44,618)
\psline[linewidth=0.4pt,linestyle=dashed,dash=1pt 1pt](1334.44,618)(1328.44,612)
\psline[linewidth=0.4pt,linestyle=dashed,dash=1pt 1pt](1328.44,612)(1325.41,609.93)
\psline[linewidth=0.4pt,linestyle=dashed,dash=1pt 1pt](1325.41,609.93)(1322.44,608)
\psline[linewidth=0.4pt,linestyle=dashed,dash=1pt 1pt](1322.44,608)(1316.44,608)
\psline[linewidth=0.4pt,linestyle=dashed,dash=1pt 1pt](1316.44,608)(1314.44,610)
\psline[linewidth=0.4pt,linestyle=dashed,dash=1pt 1pt](1314.44,610)(1312.44,614)
\psline[linewidth=0.4pt,linestyle=dashed,dash=1pt 1pt](1312.44,614)(1314.44,618)
\psline[linewidth=0.4pt,linestyle=dashed,dash=1pt 1pt](1314.44,618)(1320.44,624)
\psline[linewidth=0.4pt,linestyle=dashed,dash=1pt 1pt](1320.44,624)(1326.44,628)
\psline(1227.29,622.54)(1231.12,620.66)
\psline(1231.12,620.66)(1231.02,616.34)
\psline(1227.29,622.54)(1224.68,619.86)
\psline(1224.68,619.86)(1224.05,618.1)
\psline(1224.05,618.1)(1225.24,615.7)
\psline(1225.24,615.7)(1228.9,614.83)
\psline(1228.9,614.83)(1231.02,616.34)
\psline(1228.34,618.43)(1227.29,622.54)
\psline(1228.34,618.43)(1231.12,620.66)
\psline(1228.34,618.43)(1231.02,616.34)
\psline(1228.34,618.43)(1228.9,614.83)
\psline(1228.34,618.43)(1225.24,615.7)
\psline(1226.73,618.56)(1225.24,615.7)
\psline(1226.73,618.56)(1228.34,618.43)
\psline(1226.73,618.56)(1227.29,622.54)
\psline(1226.73,618.56)(1231.12,620.66)
\psline(1226.73,618.56)(1228.9,614.83)
\psline(1226.73,618.56)(1224.68,619.86)
\psline(1224.05,618.1)(1226.73,618.56)
\psline(1228.34,618.43)(1224.63,616.94)
\psline(1226.02,621.23)(1226.73,618.56)
\psline(1227.09,617.93)(1225.24,615.7)
\psline(1226.73,618.56)(1224.63,616.94)
\psline(1281.46,626)(1285.29,624.13)
\psline(1285.29,624.13)(1285.19,619.8)
\psline(1281.46,626)(1278.86,623.32)
\psline(1278.86,623.32)(1278.23,621.57)
\psline(1278.23,621.57)(1279.42,619.17)
\psline(1279.42,619.17)(1283.07,618.3)
\psline(1283.07,618.3)(1285.19,619.8)
\psline(1282.51,621.9)(1281.46,626)
\psline(1282.51,621.9)(1285.29,624.13)
\psline(1282.51,621.9)(1285.19,619.8)
\psline(1282.51,621.9)(1283.07,618.3)
\psline(1282.51,621.9)(1279.42,619.17)
\psline(1280.9,622.02)(1279.42,619.17)
\psline(1280.9,622.02)(1282.51,621.9)
\psline(1280.9,622.02)(1281.46,626)
\psline(1280.9,622.02)(1285.29,624.13)
\psline(1280.9,622.02)(1283.07,618.3)
\psline(1280.9,622.02)(1278.86,623.32)
\psline(1278.23,621.57)(1280.9,622.02)
\psline(1282.51,621.9)(1278.8,620.41)
\psline(1280.2,624.7)(1280.9,622.02)
\psline(1281.27,621.4)(1279.42,619.17)
\psline(1280.9,622.02)(1278.8,620.41)
\psline(1326.49,618.63)(1330.32,616.76)
\psline(1330.32,616.76)(1330.22,612.43)
\psline(1326.49,618.63)(1323.89,615.95)
\psline(1323.89,615.95)(1323.26,614.2)
\psline(1323.26,614.2)(1324.44,611.8)
\psline(1324.44,611.8)(1328.1,610.93)
\psline(1328.1,610.93)(1330.22,612.43)
\psline(1327.54,614.52)(1326.49,618.63)
\psline(1327.54,614.52)(1330.32,616.76)
\psline(1327.54,614.52)(1330.22,612.43)
\psline(1327.54,614.52)(1328.1,610.93)
\psline(1327.54,614.52)(1324.44,611.8)
\psline(1325.93,614.65)(1324.44,611.8)
\psline(1325.93,614.65)(1327.54,614.52)
\psline(1325.93,614.65)(1326.49,618.63)
\psline(1325.93,614.65)(1330.32,616.76)
\psline(1325.93,614.65)(1328.1,610.93)
\psline(1325.93,614.65)(1323.89,615.95)
\psline(1323.26,614.2)(1325.93,614.65)
\psline(1327.54,614.52)(1323.83,613.04)
\psline(1325.22,617.33)(1325.93,614.65)
\psline(1326.3,614.02)(1324.44,611.8)
\psline(1325.93,614.65)(1323.83,613.04)
\psline(1222.66,568.49)(1226.5,566.61)
\psline(1226.5,566.61)(1226.4,562.29)
\psline(1222.66,568.49)(1220.06,565.81)
\psline(1220.06,565.81)(1219.43,564.06)
\psline(1219.43,564.06)(1220.62,561.66)
\psline(1220.62,561.66)(1224.28,560.78)
\psline(1224.28,560.78)(1226.4,562.29)
\psline(1223.72,564.38)(1222.66,568.49)
\psline(1223.72,564.38)(1226.5,566.61)
\psline(1223.72,564.38)(1226.4,562.29)
\psline(1223.72,564.38)(1224.28,560.78)
\psline(1223.72,564.38)(1220.62,561.66)
\psline(1222.11,564.51)(1220.62,561.66)
\psline(1222.11,564.51)(1223.72,564.38)
\psline(1222.11,564.51)(1222.66,568.49)
\psline(1222.11,564.51)(1226.5,566.61)
\psline(1222.11,564.51)(1224.28,560.78)
\psline(1222.11,564.51)(1220.06,565.81)
\psline(1219.43,564.06)(1222.11,564.51)
\psline(1223.72,564.38)(1220.01,562.89)
\psline(1221.4,567.18)(1222.11,564.51)
\psline(1222.47,563.88)(1220.62,561.66)
\psline(1222.11,564.51)(1220.01,562.89)
\psline(1230.51,528.33)(1234.34,526.46)
\psline(1234.34,526.46)(1234.24,522.14)
\psline(1230.51,528.33)(1227.91,525.65)
\psline(1227.91,525.65)(1227.28,523.9)
\psline(1227.28,523.9)(1228.46,521.5)
\psline(1228.46,521.5)(1232.12,520.63)
\psline(1232.12,520.63)(1234.24,522.14)
\psline(1231.56,524.23)(1230.51,528.33)
\psline(1231.56,524.23)(1234.34,526.46)
\psline(1231.56,524.23)(1234.24,522.14)
\psline(1231.56,524.23)(1232.12,520.63)
\psline(1231.56,524.23)(1228.46,521.5)
\psline(1229.95,524.35)(1228.46,521.5)
\psline(1229.95,524.35)(1231.56,524.23)
\psline(1229.95,524.35)(1230.51,528.33)
\psline(1229.95,524.35)(1234.34,526.46)
\psline(1229.95,524.35)(1232.12,520.63)
\psline(1229.95,524.35)(1227.91,525.65)
\psline(1227.28,523.9)(1229.95,524.35)
\psline(1231.56,524.23)(1227.85,522.74)
\psline(1229.24,527.03)(1229.95,524.35)
\psline(1230.32,523.73)(1228.46,521.5)
\psline(1229.95,524.35)(1227.85,522.74)
\psline(1278,523.98)(1281.83,522.11)
\psline(1281.83,522.11)(1281.73,517.79)
\psline(1278,523.98)(1275.4,521.3)
\psline(1275.4,521.3)(1274.77,519.55)
\psline(1274.77,519.55)(1275.95,517.15)
\psline(1275.95,517.15)(1279.61,516.28)
\psline(1279.61,516.28)(1281.73,517.79)
\psline(1279.05,519.88)(1278,523.98)
\psline(1279.05,519.88)(1281.83,522.11)
\psline(1279.05,519.88)(1281.73,517.79)
\psline(1279.05,519.88)(1279.61,516.28)
\psline(1279.05,519.88)(1275.95,517.15)
\psline(1277.44,520)(1275.95,517.15)
\psline(1277.44,520)(1279.05,519.88)
\psline(1277.44,520)(1278,523.98)
\psline(1277.44,520)(1281.83,522.11)
\psline(1277.44,520)(1279.61,516.28)
\psline(1277.44,520)(1275.4,521.3)
\psline(1274.77,519.55)(1277.44,520)
\psline(1279.05,519.88)(1275.34,518.39)
\psline(1276.73,522.68)(1277.44,520)
\psline(1277.8,519.38)(1275.95,517.15)
\psline(1277.44,520)(1275.34,518.39)
\psline(1228.71,478.6)(1232.54,476.73)
\psline(1232.54,476.73)(1232.45,472.41)
\psline(1228.71,478.6)(1226.11,475.93)
\psline(1226.11,475.93)(1225.48,474.17)
\psline(1225.48,474.17)(1226.67,471.77)
\psline(1226.67,471.77)(1230.32,470.9)
\psline(1230.32,470.9)(1232.45,472.41)
\psline(1229.76,474.5)(1228.71,478.6)
\psline(1229.76,474.5)(1232.54,476.73)
\psline(1229.76,474.5)(1232.45,472.41)
\psline(1229.76,474.5)(1230.32,470.9)
\psline(1229.76,474.5)(1226.67,471.77)
\psline(1228.15,474.63)(1226.67,471.77)
\psline(1228.15,474.63)(1229.76,474.5)
\psline(1228.15,474.63)(1228.71,478.6)
\psline(1228.15,474.63)(1232.54,476.73)
\psline(1228.15,474.63)(1230.32,470.9)
\psline(1228.15,474.63)(1226.11,475.93)
\psline(1225.48,474.17)(1228.15,474.63)
\psline(1229.76,474.5)(1226.05,473.01)
\psline(1227.45,477.3)(1228.15,474.63)
\psline(1228.52,474)(1226.67,471.77)
\psline(1228.15,474.63)(1226.05,473.01)
\psline(1227.97,417.91)(1231.8,416.03)
\psline(1231.8,416.03)(1231.7,411.71)
\psline(1227.97,417.91)(1225.37,415.23)
\psline(1225.37,415.23)(1224.74,413.48)
\psline(1224.74,413.48)(1225.92,411.08)
\psline(1225.92,411.08)(1229.58,410.2)
\psline(1229.58,410.2)(1231.7,411.71)
\psline(1229.02,413.8)(1227.97,417.91)
\psline(1229.02,413.8)(1231.8,416.03)
\psline(1229.02,413.8)(1231.7,411.71)
\psline(1229.02,413.8)(1229.58,410.2)
\psline(1229.02,413.8)(1225.92,411.08)
\psline(1227.41,413.93)(1225.92,411.08)
\psline(1227.41,413.93)(1229.02,413.8)
\psline(1227.41,413.93)(1227.97,417.91)
\psline(1227.41,413.93)(1231.8,416.03)
\psline(1227.41,413.93)(1229.58,410.2)
\psline(1227.41,413.93)(1225.37,415.23)
\psline(1224.74,413.48)(1227.41,413.93)
\psline(1229.02,413.8)(1225.31,412.31)
\psline(1226.7,416.6)(1227.41,413.93)
\psline(1227.78,413.3)(1225.92,411.08)
\psline(1227.41,413.93)(1225.31,412.31)
\end{pspicture*}
 \caption{Copies of $y'$ appear  in $y$ ``nearly"  centered  on $\lambda F+\vec{t}_\lambda$ up to ``bounded perturbations" in the structure of the homothetic copies of $F$.}
 \end{figure}
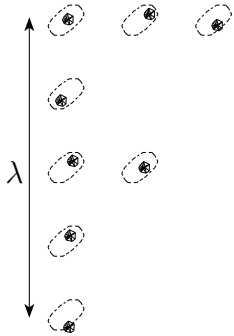

\end{document}